\title{\vspace{-1cm}Propagation of singularities by Osgood vector fields \\ and for 2D inviscid incompressible fluids}
\date{}
\newtheorem{theorem}{Theorem}
\newtheorem{lemma}{Lemma}
\newtheorem{remark}{Remark}
\newcommand{\be}{\begin{equation}}
\newcommand{\ee}{\end{equation}}
\newcommand{\rmd}{{\rm d}}
\author{Theodore D. Drivas\footnote{Department of Mathematics, SUNY Stony Brook, Stony Brook, NY 11794 and School of Mathematics, Institute for Advanced Study, 1 Einstein Dr., Princeton, NJ 08540 ({\tt tdrivas@math.stonybrook.edu, tdrivas@ias.edu})}, \ Tarek M. Elgindi\footnote{Mathematics Department, Duke University, 120 Science Drive, Durham, NC 27708-0320  ({\tt tarek.elgindi@duke.edu})} \ 
and Joonhyun La\footnote{Department of Mathematics, Stanford University, Stanford, CA 94305 ({\tt joonhyun@stanford.edu})}}
\begin{document}

\maketitle
\vspace{-8mm}

\begin{abstract}
We show that certain singular structures (H\"{o}lderian cusps and mild divergences) are transported by the flow of homeomorphisms generated by an Osgood velocity field.  The structure of these singularities is related to the modulus of continuity of the velocity and the results are shown to be sharp in the sense that slightly more singular structures cannot generally be propagated.  For the 2D Euler equation, we prove that certain singular structures are preserved by the motion, e.g. a system of $\log\log_+(1/|x|)$ vortices (and those that are slightly less singular) travel with the fluid in a nonlinear fashion, up to bounded perturbations.  We also give stability results for weak Euler solutions away from their singular set.
\end{abstract}

The study of existence and uniqueness of solutions of ordinary differential equations and the corresponding transport PDE is an old subject.  By Peano's existence theorem, a bounded and continuous vector field $u(x,t):\mathbb{R}^d\times \mathbb{R}\to \mathbb{R}^d$ produces at least one solution of
\begin{align}\label{flowmap}
\frac{\rmd}{\rmd t} \phi(x,t) &= u(\phi(x,t), t), \qquad \phi(x,0) = x,
\end{align}
 for all $t\in \mathbb{R}$ and each $x\in \mathbb{R}^d$.   The classical Cauchy-Lipschitz theorem guarantees uniqueness of this solution provided that the velocity field is  Lipschitz continuous.  More generally, if we suppose that $u: \mathbb{R}^d\times \mathbb{R} \rightarrow \mathbb{R}^d$ has modulus of continuity $L: (0,\mathsf{m}_L) \rightarrow \mathbb{R}^+$ 
 with $\mathsf{m}_L>0$, that is
\begin{equation}\label{modulus}
|u(x, t) - u(y,t)| \le \|u(t)\|_L L(|x-y|),
\end{equation}
then the sharp conditions for uniqueness of solutions to Equation \eqref{flowmap} is that of Osgood (see e.g. \cite{H02,BCD11}).  The requirement on $L$ for it to be an Osgood modulus is that  $\lim_{z \rightarrow 0^+} \mathcal{M}(z) = \infty$ where 
\begin{equation}\label{mdef}
\mathcal{M}(z) := \int_z ^{\mathsf{m}_L} \frac{\rmd r}{L(r)}.
\end{equation}
We recall some examples (we will denote by $\log_+(z) = \max\{0, \log(z)\}$):  
\begin{itemize}
\item if $u$ is Lipschitz, i.e. $L(z) = z$ with $\mathsf{m}_L=1$, then $\mathcal{M}(z) = \log_+ (1/z)$,
\item  if $u$ is log-Lipschitz, i.e. $L(z) = z\log(1/z)$ with $\mathsf{m}_L=1/e$, then $\mathcal{M}(z) = \log \log_+ (1/z)$,
\item  generally, if $L(z) = z\log(1/z)\log_2(1/z)\dots \log_n(1/z) $   and $\mathsf{m}_L=1/e_{m}(1)$, with $n\in \mathbb{N}$ and 
$
\log_n (z)
=
\underbrace{\log\log\dots\log}_{\text{$n$ times}} z ,
$
then $\mathcal{M}(z) = \log_{1+n} (1/z)$.

\end{itemize}

A related issue is uniqueness of solutions to the corresponding transport equation
\begin{align} \label{transport}
\partial_t \theta + u \cdot \nabla \theta &= 0,\\
\theta|_{t=0}&=\theta_0.
\end{align}
It is known by the works \cite{AB08,CC21} that, if $u$ is an Osgood velocity field, then all integrable weak solutions are unique and admit a Lagrangian representation using the flowmap $\phi(x,t)$ \eqref{flowmap}
\be\label{lagrangianformula}
\theta(x,t) = \theta_0 (\phi^{-1}(x,t)). 
\ee
Of course, being able to uniquely solve the equations and write down the formula \eqref{lagrangianformula} does not imply that one knows how the solutions behave in any quantitative way. That is to say, while we may know that smooth initial data leads to smooth solutions, we may not actually know how the solutions look.  Moreover, the specific details would seem to depend very sensitively on the precise form of the advecting velocity.

In this paper, we show that certain (singular)  features which -- if present in the initial conditions $\theta_0$ of \eqref{transport} -- are preserved by all Osgood vector fields of a certain modulus (Theorem \ref{thmnormprop} and  \ref{thmnormprop2} herein).
As special case of our Theorem \ref{thmnormprop2} is: if $L$ is an Osgood modulus and $\mathcal{M}$ is given by \eqref{mdef}, then the solution $\theta(x,t)$ of the transport equation \eqref{transport}  supplied with initial datum
\begin{equation}\label{theta0dataint}
\theta_0 (x) = \mathcal{M} (|x - x_0 | ) + b_0(x)
\end{equation}
takes the following form for all $t\in \mathbb{R}$ 
\begin{equation}\label{thetasolutionint}
\theta(x,t) = \mathcal{M} (|x - \phi(x_0, t) | ) + b(x,t),
\end{equation} 
with remainder $b(x,t)$ which is bounded for all finite times. As can be seen from the examples above, the structures  $ \mathcal{M} (|x| ) $ are mildly singular in nature, possessing infinitely small scales while at the same time maintaining integrability.  Their shape and amplitude are unaltered by the motion induced by any (compatible in the above sense) Osgood field.  This fact is indicative of the inability for fields of that regularity to alter such fine structures due to not possessing sufficiently strong small scale motions themselves.  We also show by example that, for vector fields with a given Osgood modulus, any singularity steeper than $\mathcal{M}$ in the initial data will not in general be propagated in the solution of the transport equation.   As such, persistence of these singular structures provides insight into sharp borderline phenomena in linear transport.  In the nonlinear setting, some special structures such as {algebraic} singularities  \cite{Elling,BM} and vortex patches \cite{Chem,S94,BC,EJ} can be analyzed in detail. However, note that even analytic velocity fields do not in general allow for propagation of {algebraic} singularities -- their preservation seems to require additional structure \cite{Elling}.

We also discuss some application of these ideas to the two-dimensional Euler equations governing the motion of the velocity field $u$ of a perfect (inviscid and incompressible) fluid. These equations can be written as a nonlinear and nonlocal transport equation for the vorticity field $\omega = \nabla^\perp \cdot u$ where $\nabla^\perp = (-\partial_2, \partial_1)$:
\begin{align}\label{ee1}
\partial_t \omega + u \cdot \nabla \omega &=0, \\
u=\nabla^\perp &\Delta^{-1} \omega,\\
\omega|_{t=0}&=\omega_0.\label{ee2}
\end{align}
For the Euler equations, we establish a nonlinear analogue of our propagation of singularity result for linear transport.  Specifically, we show that if 
the  initial vorticity takes the form
\begin{equation}\label{om0dataint}
\omega_0 (x) = \log\log_+(1/|x | ) + b_0(x), \qquad b_0\in L^\infty(\mathbb{R}^2)\cap L^1(\mathbb{R}^2),
\end{equation}
then there exists a scalar function $b \in L_{loc}^\infty(\mathbb{R}; L^\infty(\mathbb{R}^2)\cap L^1(\mathbb{R}^2))$  and a $C^1$ map $\phi_*(t): \mathbb{R}\to \mathbb{R}^2$ such that  the unique weak solution $\omega(t)$ of 2D Euler emanating from $\omega_0$ takes the form:
\begin{equation}\label{omsolutionint}
\omega(x,t) =  \log\log_+(1/|x - \phi_*(t) |  ) + b(x,t).
\end{equation}
In fact, a more general statement is proved  for a remainder $b$ propagating in a suitable Yudovich space $Y_\Theta$.
Also, any finite number of such singular vortices can be evolved, each feeling each-others induced velocity field and moving accordingly. 
This is reminiscent of  the vortex-wave system where the interaction of multiple point-mass vortices with a perturbation is studied. However, due to their highly singular nature point-vortices do not actually solve the Euler equations even in a weak sense \cite{sch} and the studies which we are aware of actually require that the perturbations be constant in a neighborhood of the vortices. Our result gives a propagation of singular vortices for genuine 2D Euler solutions without any restriction on the perturbation aside from boundedness. 

We would like to mention here a prior result on propagation of singularities for 2D Euler due to Shnirelman \cite{S97}.  In that work, Euler solutions at finite (Besov) regularity are studied and it is shown that the most singular behavior (the ``germ" at infinity) of a certain vorticity-like quantity constructed from the Lagrangian flowmap is a generalized Lyapunov functional for Euler and is, in a sense, increasing on an open dense set in the phase space studied. From this, new integrals of motion are also constructed.  These results, while distinct from our own here, bare some qualitative similarities which would be interesting to explore in more detail.

All the above results holds in an existence/uniqueness class for the transport and Euler equation.  In \S \ref{DMclass}, we obtain a conditional uniqueness result for vorticity in $L^p$ with $p\geq 2$ (for which existence holds by DiPerna-Majda \cite{DM87}, but not uniqueness according to Vishik \cite{Vishik1,Vishik2}). Namely, the singular structure determines the whole solution in a precise sense.  As a consequence, if two solutions with the same initial data have isolated vortices with algebraic divergences propagating in exactly the same way in an analytic (or Gevrey) regular background, then the two solutions must coincide.

\section{Propagation of singularities for linear transport}\label{lintrans}

Here we study the propagation of singularities by the flow generated from any Osgood vector field, whose definition we recall here.
Suppose that $u: \mathbb{R}^d \rightarrow \mathbb{R}^d$ has modulus of continuity
\begin{equation}
L: (0,\mathsf{m}_L) \rightarrow \mathbb{R}^+
\end{equation}
where $\mathsf{m}_L>0$ is a suitable small number. 
We denoteby  $\phi$  the flow generated by $u$. It satisfies the ordinary differential equation \eqref{flowmap}.
Then, we have
\begin{equation}
|\phi(x,t) - \phi(y,t)| \le |x-y| + \int_0 ^t \| u(s) \|_{L} L(|\phi(x,s) - \phi(y,s) | ) \rmd s.
\end{equation}
Let $\mathcal{M}$ be given by \eqref{mdef}.
The vector field $u$ is called \emph{Osgood} continuous if $\lim_{z \rightarrow 0^+} \mathcal{M}(z) = \infty$. This, in turn, implies $\lim_{r\rightarrow 0^+} L(r) = 0$. 
By Osgood's lemma, we have
\begin{equation} \label{Osg}
-\mathcal{M} (|\phi(x,t) - \phi(y,t) | ) \le -\mathcal{M} (|x-y| ) + \int_0 ^t \|u(s) \|_L \rmd s,
\end{equation}
which guarantees uniqueness of solutions of the ODE \eqref{flowmap}.

We consider solutions to the transport equation \eqref{transport}.
Recall that for the continuity equation driven by an Osgood velocity field, all integrable weak solutions are Lagrangian and are therefore unique  \cite{AB08,CC21}.  Our aim here to show that certain singular structures are invariant in these solutions. We quantify these using
the following seminorm
\begin{equation}
[f]_{\gamma, L} = \lim_{r\rightarrow 0^+} \sup_{x, y: 0 < |x-y| < r} \frac{|f(x) - f(y) |}{\mathcal{M} (|x-y| )^\gamma }, \qquad  \gamma \in \mathbb{R},
\end{equation}
as well as the 
 localized seminorm
\begin{equation}
[f]_{x, \gamma, L} = \lim_{r\rightarrow 0^+} \sup_{ y: 0 < |x-y| < r} \frac{|f(x) - f(y) |}{\mathcal{M} (|x-y| )^\gamma }.
\end{equation}
 Our first result, inspired by the recent work of Chae and Jeong \cite{CJ21} on the preservation of logarithmic cusps for Lipschitz drifts, is:

\begin{theorem}\label{thmnormprop}
Let $L: (0,1) \rightarrow \mathbb{R}^+$ be an Osgood modulus of continuity as above, with $\mathcal{M}$ given by \eqref{mdef} and satisfying  $\lim_{z \rightarrow 0^+} \mathcal{M}(z) = \infty$.
Suppose that an (incompressible) velocity field $u$ has the modulus of continuity $L$, i.e. there exists a bounding function  $C\in L^1_{loc}(0,\infty)$ such that 
\begin{equation}
|u(x,t) - u(y,t) | \le C(t) L(|x-y|), \qquad \forall t \in \mathbb{R}^+.
\end{equation}
Then, for $\gamma\in \mathbb{R}$, the unique weak solution $\theta(t)$ of the transport equation \eqref{transport} satisfies
\begin{equation}
[\theta(t)]_{\gamma, L} = [\theta_0]_{\gamma, L}.
\end{equation}
In fact, the localized seminorm $[f]_{x, \gamma, L}$
is carried by the flow $\phi$  of $u$, i.e. 
$
[\theta(t)]_{\phi(x,t), \gamma, L} = [\theta_0]_{x, \gamma, L}.
$
\end{theorem}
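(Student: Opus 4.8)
The plan is to reduce the statement to a single quantitative fact about the flow: an \emph{additive} two-sided bound on $\mathcal{M}$ evaluated along pairs of trajectories,
\begin{equation*}
\bigl|\,\mathcal{M}(|\phi(x,t)-\phi(y,t)|) - \mathcal{M}(|x-y|)\,\bigr| \le K(t), \qquad K(t):=\int_0^t C(s)\,\rmd s ,
\end{equation*}
valid for all $x\neq y$ and all $t$. Granting this, and recalling that the unique weak solution is Lagrangian, $\theta(x,t)=\theta_0(\phi^{-1}(x,t))$, by \cite{AB08,CC21}, the seminorms are transported by a change of variables; the point is that since $\mathcal{M}(|x-y|)\to\infty$ as $|x-y|\to 0$, the fixed bound $K(t)$ is multiplicatively negligible.

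\textbf{The two-sided flow estimate.} Fix $x\ne y$ and set $\xi(s)=|\phi(x,s)-\phi(y,s)|$. By Osgood uniqueness the two trajectories never collide, so $\xi(s)>0$ for all $s$, and $\xi$ is locally Lipschitz in $s$ with $|\xi'(s)|\le|u(\phi(x,s),s)-u(\phi(y,s),s)|\le C(s)L(\xi(s))$ for a.e. $s$. Since $\mathcal{M}'(z)=-1/L(z)$, this gives $\bigl|\tfrac{\rmd}{\rmd s}\mathcal{M}(\xi(s))\bigr|\le C(s)$ a.e., and integrating from $0$ to $t$ yields the bound (it is just the two-sided form of \eqref{Osg}). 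As a by-product the same bound shows $\phi(\cdot,t)$ and $\phi^{-1}(\cdot,t)$ are uniformly continuous homeomorphisms of $\mathbb{R}^d$ with modulus controlled through $\mathcal{M}^{-1}$; in particular, for each fixed $t$, the punctured balls $\{0<|x-y|<r\}$ and $\{0<|\phi(x,t)-\phi(y,t)|<r\}$ (around $x$ and $\phi(x,t)$ respectively) are sandwiched between each other's dilations by radii $\mathcal{M}^{-1}(\mathcal{M}(r)\pm K(t))$, both of which tend to $0$ as $r\to 0$.

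\textbf{Transporting the seminorms.} Substituting $y\mapsto\phi(y,t)$ in the definition of $[\theta(t)]_{\phi(x,t),\gamma,L}$ and using $\theta(\phi(\cdot,t),t)=\theta_0(\cdot)$, one obtains
\begin{equation*}
\frac{|\theta(\phi(x,t),t)-\theta(\phi(y,t),t)|}{\mathcal{M}(|\phi(x,t)-\phi(y,t)|)^\gamma}=\frac{|\theta_0(x)-\theta_0(y)|}{\mathcal{M}(|x-y|)^\gamma}\left(\frac{\mathcal{M}(|x-y|)}{\mathcal{M}(|\phi(x,t)-\phi(y,t)|)}\right)^{\!\gamma}.
\end{equation*}
By the homeomorphism property, as the radius $r$ in the time-$t$ supremum tends to $0$ the variable $y$ ranges over punctured neighborhoods of $x$ sandwiched between balls of radii tending to $0$; and by the two-sided estimate the correction factor lies between $\bigl(\mathcal{M}(r'')/(\mathcal{M}(r'')+K(t))\bigr)^{\gamma}$ and $\bigl(\mathcal{M}(r'')/(\mathcal{M}(r'')-K(t))\bigr)^{\gamma}$ on such a neighborhood, where $r''=r''(r)\to 0$, hence is within $\varepsilon$ of $1$ once $r$ is small. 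Taking suprema and letting $r\to 0^+$ therefore squeezes $[\theta(t)]_{\phi(x,t),\gamma,L}$ between $(1\pm\varepsilon)$-multiples of $[\theta_0]_{x,\gamma,L}$ for every $\varepsilon>0$, proving the localized identity (which holds also when either side is $0$ or $+\infty$). The non-localized identity $[\theta(t)]_{\gamma,L}=[\theta_0]_{\gamma,L}$ is obtained identically, now writing both $x=\phi(a,t)$ and $y=\phi(b,t)$ and using that the families $\{|\phi(a,t)-\phi(b,t)|<r\}$ and $\{|a-b|<r\}$ are comparable as $r\to 0^+$.

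\textbf{Main obstacle.} No step is computationally heavy; the delicate point is the bookkeeping that converts the \emph{additive} flow estimate into the assertion that the multiplicative correction $\bigl(\mathcal{M}(|x-y|)/\mathcal{M}(|\phi(x,t)-\phi(y,t)|)\bigr)^{\gamma}$ tends to $1$ \emph{uniformly} over the relevant shrinking neighborhoods — including for $\gamma<0$, where $\mathcal{M}^\gamma\to 0$ — together with the verification that the two families of suprema (at times $0$ and $t$) have the same limit. Both rest solely on $\mathcal{M}(z)\to\infty$ as $z\to 0^+$ and the finiteness of $K(t)$ on bounded time intervals, i.e. precisely on the Osgood hypothesis.
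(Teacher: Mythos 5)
Your proposal is correct and follows essentially the same route as the paper's proof: a two-sided Osgood estimate $|\mathcal{M}(|\phi(x,t)-\phi(y,t)|)-\mathcal{M}(|x-y|)|\le K(t)$ (the paper obtains it via $R(z)=\exp(-\mathcal{M}(z))$ and time reversal, you differentiate $\mathcal{M}(\xi(s))$ directly, which is the same inequality \eqref{Osg}), followed by the ball-sandwiching argument identifying the two families of suprema and the observation that the multiplicative correction $\bigl(\mathcal{M}(|x-y|)/\mathcal{M}(|\phi(x,t)-\phi(y,t)|)\bigr)^{\gamma}$ tends to $1$ uniformly because $\mathcal{M}\to\infty$. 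The only difference is cosmetic (additive versus exponentiated form of the Osgood bound).
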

Here, $\gamma>0$ corresponds to the propagation of singularity, and $\gamma<0$ corresponds to the propagation of cusps. 
\begin{remark}
More generally, if $G : (0, \infty) \rightarrow (0, \infty)$ is a monotone function satisfying 
\begin{equation}
\lim_{r\rightarrow \infty} \frac{G(r+c)}{G(r) } = 1,\qquad \text{for any} \  c \in \mathbb{R},
\end{equation}
for example, $G(x) = x^\gamma$ or $G(x) = \log (1+x)$, then the following local seminorm
\begin{equation}
[\theta]_{x, G, L} = \lim_{r\rightarrow 0^+} \sup_{y: 0 < |x-y| < r } \frac{|f(x) - f(y) | }{G(\mathcal{M} (|x-y| ) ) }
\end{equation}
is preserved by the flow: $[\theta(t) ]_{\phi(x,t), G, L} = [\theta_0]_{x, G, L}$.
\end{remark}
\begin{proof}
Define
\begin{equation}
R(z) := \exp ( - \mathcal{M}(z) ).
\end{equation}
Since $u$ is Osgood, it follows that $\lim_{z \rightarrow 0^+} R(z) = 0$. Moreover, $R$ is strictly increasing in $z$ and $R(1) = 1$. Next, by exponentiating \eqref{Osg}, we have
\begin{equation}
\frac{R(|\phi(x,t) - \phi(y,t) | )}{R(|x-y|) } \le \mu(t), 
\end{equation}
where 
\begin{equation}\label{mudef}
\mu(t) = e^{\int_0 ^t \| u(s) \|_L ds}.
\end{equation}
Here, $|x-y|$ is assumed to be small so that $\mu(t) R(|x-y|) < 1$. This assumption implies that $|\phi(x,s) - \phi(y,s) | < 1$ for all $s \in [0, t]$: first $s \rightarrow |\phi(x,s) - \phi(y,s) |$ is a continuous function of $s$. Let $t_0 = \sup \{ \tau \in [0, t] : |\phi(x,s) - \phi(y,s) | < 1 \text{ for all } s \le [0, \tau] \}.$ Suppose $t_0 < t$. Then for all $s < t_0$,
\begin{equation}
|\phi(x,s) - \phi(y,s) |  \le R^{-1} (\mu(s) R(|x-y|) ) \le R^{-1} (\mu(t) R(|x-y| ) ) < 1.
\end{equation}
Thus, $|\phi(x,t_0) - \phi(y,t_0) | \le R^{-1} (\mu(t) R(|x-y| ) ) < 1$ and also if we extend $s \rightarrow |\phi(x,s) - \phi(y,s) |$ past $t_0$, then there exists a small $\varepsilon>0$, $|\phi(x,s) - \phi(y,s) | < 1$ for every $s \in (t_0, t_0 + \varepsilon) \subset [0, t]$, which is a contradiction. Therefore, $t_0 = t$.

Also, we can reverse the time: then we obtain
\begin{equation} \label{Ballestimate}
\frac{1}{\mu(t) } \le \frac{R(|\phi(x,t) - \phi(y,t) | )}{R(|x-y|) } \le \mu(t).
\end{equation}
The consequence of \eqref{Ballestimate} is the following: let $r<1$ be a small radius so that $R(r) \mu(t), R(r) \mu(t)^2 < 1$. Now if $|x-y|<r$, then 
\begin{equation}
|\phi(x,t) - \phi(y,t) | \le R^{-1} (\mu(t) R(r)).
\end{equation}
Therefore, we have 
\begin{equation}
\phi( B_r (x), t) \subset B_{R^{-1} (\mu(t) R(r) ) } (\phi(x,t) ).
\end{equation}
This implies that 
\begin{equation}
\sup_{y \in B_r(x) } f(\phi(y,t)) \le \sup_{y \in B_{R^{-1} (\mu(t) R(r) ) }(\phi(x, t) )} f(y)
\end{equation}
On the other hand, suppose that $y \in B_r (\phi(x,t))$. Then there is $z \in \mathbb{R}^d$ such that $y = \phi(z,t)$. Thus, we have
\begin{equation}
r \ge | y - \phi(x,t) | = | \phi(z,t) - \phi(x,t) | \ge R^{-1} \left ( \frac{R(|z-x|)}{\mu(t) } \right ).
\end{equation}
In particular, we have
\begin{equation}
|z-x| \le R^{-1} (\mu(t) R(r) ).
\end{equation}
Therefore,
\begin{equation}
\sup_{y \in B_r (\phi(x,t))} f(y) \le \sup_{z: \phi(z,t) \in B_r(\phi(x,t)), z \in B_{R^{-1} (\mu(t) R(r) ) } (x) } f(\phi(z,t)) \le \sup_{z \in B_{R^{-1}(\mu(t) R(r) ) } (x) } f(\phi(z,t)).
\end{equation}
Consequently, we have
\begin{equation}
\sup_{y \in B_r (x) } f(\phi(y,t) ) \le \sup_{y \in B_{R^{-1} (\mu(t) R(r) ) } (\phi(x,t) ) } f(y) \le \sup_{y \in B_{R^{-1}  (\mu(t)^2  R(r) ) } (x) } f(\phi(y,t) ).
\end{equation}
Similarly, we can see the following:
\begin{equation}
\sup_{y \in B_{R^{-1} (\mu(t)^{-1} R(r) )} (\phi(x,t))} f(y) \le \sup_{y \in B_r (x) } f(\phi(y,t)) \le \sup_{y \in B_{R^{-1} (\mu(t) R(r) )} (\phi(x,t) )} f(y).
\end{equation}
Since $\lim_{r \rightarrow 0^+} R(r) = 0$, by taking limit $r \rightarrow 0^+$, radius of all balls in the above shrinks to 0 as $r \rightarrow 0^+$. Therefore, we have
\begin{equation}
\lim_{r \rightarrow 0^+} \sup_{y \in B_r (x) } f(\phi(y,t) ) = \lim_{r \rightarrow 0^+} \sup_{y \in B_r (\phi(x,t) ) } f(y).
\end{equation}
Another consequence of \eqref{Ballestimate} is the following: again by taking logarithm, we have
\begin{equation}
- \log \mu(t) - \mathcal{M} (|x-y| ) \le - \mathcal{M} (|\phi(x,t) - \phi(y,t) | ) \le \log \mu(t) - \mathcal{M} (|x-y| ).
\end{equation}
Dividing by $-\mathcal{M}(|x-y|)<0$ gives
\begin{equation}
1 + \frac{\log \mu(t) }{\mathcal{M}(|x-y| ) } \ge \frac{ \mathcal{M} (|\phi(x,t) - \phi(y,t) | )}{\mathcal{M}(|x-y| ) } \ge 1 -  \frac{\log \mu(t) }{\mathcal{M}(|x-y| ) }.
\end{equation}
Since $\lim_{r\rightarrow 0^+} \mathcal{M}(r) = \infty$, both leftmost and rightmost terms for the above are converging to $1$, and thus for any $\gamma \in \mathbb{R}$, we have 
\begin{equation}
\begin{gathered}
\lim_{r \rightarrow 0^+} \sup_{y \in B_r (x) } \frac{ \mathcal{M} (|\phi(x,t) - \phi(y,t) | )^\gamma }{\mathcal{M}(|x-y| )^\gamma } = 1, \\
\lim_{r \rightarrow 0^+} \inf_{y \in B_r (x) } \frac{ \mathcal{M} (|\phi(x,t) - \phi(y,t) | )^\gamma }{\mathcal{M}(|x-y| )^\gamma } = 1.
\end{gathered}
\end{equation}
A more quantitative version of the statement is the following: for any $y \in B_r(x)$, $\frac{ \mathcal{M} (|\phi(x,t) - \phi(y,t) | )^\gamma }{\mathcal{M}(|x-y| )^\gamma }$ lies between $ \left (1- \frac{\log \mu(t) }{\mathcal{M}(r) } \right )^\gamma$ and $ \left (1+ \frac{\log \mu(t) }{\mathcal{M}(r) } \right )^\gamma$.

In fact, we can generalize this to various functions $G$ specified above. We see that 
\be
\frac{G(\mathcal{M}(|x-y|) - \log \mu(t))}{G(\mathcal{M}(|x-y| ) )}\leq \frac{G(\mathcal{M}(|\phi(x,t) - \phi(y,t) | ) )}{G(\mathcal{M}(|x-y| ) )}\leq \frac{G(\mathcal{M}(|x-y|) + \log \mu(t))}{G(\mathcal{M}(|x-y| ) )}.
\ee
 As $|x-y| \rightarrow 0$, those two converges to $1$. Therefore, we have
\begin{equation}
1 = \lim_{r\rightarrow 0^+} \inf_{y \in B_r (x) } \frac{G(\mathcal{M} (|\phi(x,t) - \phi(y,t) | ) ) }{G(\mathcal{M}(|x-y|) ) } =  \lim_{r\rightarrow 0^+} \sup_{y \in B_r (x) } \frac{G(\mathcal{M} (|\phi(x,t) - \phi(y,t) | ) ) }{G(\mathcal{M}(|x-y|) ) }
\end{equation}

Therefore, we have the following:
\begin{equation}
\begin{gathered}
\lim_{r\rightarrow 0^+}\sup_{y \in B_r (x) } \frac{|\theta_0 (x) - \theta_0 (y) |}{\mathcal{M} (|x-y| )^\gamma } = \lim_{r\rightarrow 0^+} \sup_{y \in B_r (x) }\frac{|\theta(\phi(x,t), t) - \theta(\phi(y,t), t) |}{\mathcal{M}(|\phi(x,t) - \phi(y,t) | )^\gamma} \frac{\mathcal{M}(|\phi(x,t) - \phi(y,t) |)^\gamma}{\mathcal{M}(|x-y| )^\gamma} \\
=\lim_{r\rightarrow 0^+} \sup_{y \in B_r (x) }\frac{|\theta(\phi(x,t), t) - \theta(\phi(y,t), t) |}{\mathcal{M}(|\phi(x,t) - \phi(y,t) | )^\gamma} = \lim_{r\rightarrow 0^+} \sup_{y \in B_r (\phi(x,t) ) } \frac{|\theta(\phi(x,t),t) - \theta(y,t) | }{\mathcal{M} (|\phi(x,t) - y| )^\gamma }.
\end{gathered}
\end{equation}

More quantitatively, we can say that $\sup_{y \in B_r (x) } \frac{|\theta_0 (x) - \theta_0 (y) |}{\mathcal{M} (|x-y| )^\gamma }$ lies between $$ \sup_{y \in B_{R^{-1} (\mu(t)^{-1} R(r) ) } (\phi(x,t) ) } \frac{|\theta(\phi(x,t),t) - \theta(y,t) | }{\mathcal{M} (|\phi(x,t) - y| )^\gamma } \inf_{y \in B_r (x) } \frac{ \mathcal{M} (|\phi(x,t) - \phi(y,t) | )^\gamma }{\mathcal{M}(|x-y| )^\gamma }$$ and  $$\sup_{y \in B_{R^{-1} (\mu(t) R(r) ) } (\phi(x,t) ) } \frac{|\theta(\phi(x,t),t) - \theta(y,t) | }{\mathcal{M} (|\phi(x,t) - y| )^\gamma } \sup_{y \in B_r (x) } \frac{ \mathcal{M} (|\phi(x,t) - \phi(y,t) | )^\gamma }{\mathcal{M}(|x-y| )^\gamma }. $$
The claim follows.
\end{proof}

Theorem \ref{thmnormprop} shows quantitatively that a certain measure of regularity (or rather, singularity) is preserved in time.  We can go further to say that certain specific singular structures do not change their shape under evolution. We have

\begin{theorem}[propagation of local singular structure]\label{thmnormprop2}
There exists $r_0 \in (0,1)$, which depends only on the modulus of continuity $L$ and $T>0$, satisfying the following: suppose that
\begin{equation}\label{theta0data}
\theta_0 (x) = F (\mathcal{M} (|x - x_0 | )) + b_0(x)
\end{equation}
for $x \in B_r (x_0)$, $r<r_0$, with $\|b_0 \|_{L^\infty (B_r (x_0) )} \le B_0$ and $F$ smooth with  
\begin{equation}
[F] = \sup_{|z| \ge 1} |F'(z) | < \infty.
\end{equation}
Then for $ t \in [0, T]$ and $x \in B_{R^{-1} (\mu(t) ^{-1} R(r) )} (\phi(x_0, t) ) $, the solution $\theta(x,t)$ of the transport equation  \eqref{transport} takes the form
\begin{equation}\label{thetasolution}
\theta(x,t) = F(\mathcal{M} (|x - \phi(x_0, t) | ) ) + b(x,t),
\end{equation} 
where the remainder is a bounded function satisfying
\begin{equation}
\| b(\cdot, t)\|_{L^\infty (B_{R^{-1} (\mu(t)^{-1} R(r) ) } (\phi(x_0, t) ) )} \le \|b_0 \|_{L^\infty (B_r (x_0 ) ) } + [F] \log \mu(t)
\end{equation}
where $\mu$ is defined by equation \eqref{mudef}.
\end{theorem}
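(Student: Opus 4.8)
The plan is to combine the Lagrangian representation $\theta(x,t) = \theta_0(\phi^{-1}(x,t))$ of the weak solution (valid for Osgood fields by \cite{AB08,CC21}, as recalled around \eqref{lagrangianformula}) with the two-sided bound \eqref{Ballestimate}, whose logarithmic form reads $|\mathcal{M}(|\phi(x,t)-\phi(y,t)|) - \mathcal{M}(|x-y|)| \le \log\mu(t)$ with $\mu$ as in \eqref{mudef}. In words: $\mathcal{M}$-distances are preserved up to the additive error $\log\mu(t)$. Everything then reduces to applying this to the pair $(\phi^{-1}(x,t),x_0)$ and feeding the result through $F$.

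First I would choose $r_0 \in (0,1)$, depending only on $L$ and $T$, small enough that $R(r_0) \le e^{-1}$; since $\mathcal{M} = -\log R$, this guarantees $\mathcal{M}(\rho) \ge 1$ for every $\rho \le r_0$. Next, given $t \in [0,T]$ and $x \in B_{R^{-1}(\mu(t)^{-1}R(r))}(\phi(x_0,t))$, set $y := \phi^{-1}(x,t)$, so that $\phi(y,t) = x$. Because $R$ is strictly increasing, $|x - \phi(x_0,t)| < R^{-1}(\mu(t)^{-1}R(r))$ gives $R(|x-\phi(x_0,t)|) < \mu(t)^{-1}R(r)$, and the lower bound in \eqref{Ballestimate} applied to the pair $(y,x_0)$ yields
\[
R(|y-x_0|) \le \mu(t)\, R(|\phi(y,t)-\phi(x_0,t)|) = \mu(t)\, R(|x-\phi(x_0,t)|) < R(r),
\]
hence $|y-x_0| < r$. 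This domain-bookkeeping step is the only delicate point: it guarantees that the preimage of the shrunken target ball $B_{R^{-1}(\mu(t)^{-1}R(r))}(\phi(x_0,t))$ lies inside $B_r(x_0)$, where the datum formula \eqref{theta0data} applies.

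Finally I would compute the remainder directly. Using \eqref{theta0data} at $y$,
\[
\theta(x,t) = \theta_0(y) = F(\mathcal{M}(|y-x_0|)) + b_0(y),
\]
so defining $b(x,t) := \theta(x,t) - F(\mathcal{M}(|x-\phi(x_0,t)|))$ gives $b(x,t) = b_0(y) + \bigl(F(\mathcal{M}(|y-x_0|)) - F(\mathcal{M}(|\phi(y,t)-\phi(x_0,t)|))\bigr)$. Both $\mathcal{M}$-arguments lie in $[1,\infty)$: for $\mathcal{M}(|y-x_0|)$ because $|y-x_0| < r \le r_0$, and for $\mathcal{M}(|x-\phi(x_0,t)|)$ because $R(|x-\phi(x_0,t)|) < \mu(t)^{-1}R(r) \le R(r_0) \le e^{-1}$. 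By the logarithmic form of \eqref{Ballestimate} (equivalently \eqref{Osg} run forward and backward in time), $|\mathcal{M}(|y-x_0|) - \mathcal{M}(|\phi(y,t)-\phi(x_0,t)|)| \le \log\mu(t)$, so the mean value theorem together with $[F] = \sup_{|z|\ge 1}|F'(z)|$ gives $|F(\mathcal{M}(|y-x_0|)) - F(\mathcal{M}(|x-\phi(x_0,t)|))| \le [F]\log\mu(t)$. Therefore $|b(x,t)| \le |b_0(y)| + [F]\log\mu(t) \le \|b_0\|_{L^\infty(B_r(x_0))} + [F]\log\mu(t)$ for all such $x$, which is exactly the asserted estimate. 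The main obstacle, such as it is, is purely the indexing of radii in the second step and checking that the two $\mathcal{M}$-arguments stay in $[1,\infty)$ uniformly for $t \in [0,T]$; the analytic content is entirely contained in the single display \eqref{Ballestimate}.
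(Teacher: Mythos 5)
Your proof is correct and follows essentially the same route as the paper's: the Lagrangian representation, the decomposition $b(x,t) = b_0(\phi_t^{-1}(x)) + \bigl(F(\mathcal{M}(|\phi_t^{-1}(x)-x_0|)) - F(\mathcal{M}(|x-\phi(x_0,t)|))\bigr)$, and the mean value theorem combined with the two-sided Osgood estimate \eqref{Ballestimate}. Your domain bookkeeping (verifying $\phi_t^{-1}(x)\in B_r(x_0)$ via the lower bound in \eqref{Ballestimate}) and your choice of $r_0$ (only $\mathcal{M}(r_0)\ge 1$ is needed, since $\mu(t)\ge 1$ makes the target ball no larger than $B_r(x_0)$) are in fact slightly more explicit than the paper's, which additionally imposes $\mathcal{M}(R^{-1}(\mu(T)R(r_0)))>1$.
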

We remark that, the size of the corrector $b$ does not explicitly depend on the radius $r$ (other than that of $b_0$.) The rate of loss of radius for the local expression depends on $\mu(t)$ and the modulus of continuity $L$.
\begin{proof}
We note that
\begin{align*}
\theta(x,t) &= F(\mathcal{M} (|\phi_t ^{-1} (x) - x_0 | ) ) + b_0 (\phi_t ^{-1} (x) ) \\
&= F(\mathcal{M} (|x - \phi(x_0, t) | ) )+ b_0 (\phi_t ^{-1} (x) )   \\
&\qquad+ \big (F(\mathcal{M} (|\phi_t ^{-1} (x) - x_0 | ) ) - F(\mathcal{M} (|x - \phi(x_0, t) | ) )  \big ) 
\end{align*}
and recall that if $x \in B_{R^{-1} (\mu(t)^{-1} R (r) )} (\phi(x_0,t) )$, then $\phi_t ^{-1} (x) \in B_r (x_0)$. Then we have 
\begin{equation} \label{bLag}
b(x,t) = \big (F(\mathcal{M} (|\phi_t ^{-1} (x) - x_0 | ) ) - F(\mathcal{M} (|x - \phi(x_0, t) | ) )  \big ) + b_0 (\phi_t ^{-1} (x) )
\end{equation}
and 
\begin{equation}
|b(x,t) | \le  \| b_0 \|_{L^\infty} +  \sup_{\lambda \in [0,1] } | F' ( \lambda \mathcal{M} (|\phi^{-1}_t (x) - x_0| ) + (1-\lambda) \mathcal{M}(|x - \phi(x_0, t) |  ) ) | \log \mu(t)
\end{equation}
for $x \in B_{R^{-1} (\mu(t)^{-1} R(r) ) } (\phi(x_0, t) )$. Finally, if $r_0$ is chosen small so that both $\mathcal{M}(r_0)>1$ and $\mathcal{M}(R^{-1} (\mu(T) R(r_0) ) ) > 1$, then we see that $\mathcal{M} (|\phi^{-1}_t (x) - x_0| ), \mathcal{M} (|x - \phi(x_0, t) | ) > 1$ for all $t \in [0,T]$. Thus, we see that
\begin{equation}
|b(x,t) | \le \|b_0 \|_{L^\infty} + [F] \log \mu(t).
\end{equation}
The claim follows.
\end{proof}

\begin{remark}[Types of propagated singularities]
In particular, singularities of the form $\mathcal{M} (|x - x_0|)^\gamma$, where $\gamma \le 1$, or $\log \mathcal{M} (|x- x_0| )$, propagates up to $L^\infty$ correction. Furthermore, this suggests that various pathologies around a singularity point may propagate over the flow: for example, fix $T>0$ and consider \eqref{theta0data} 
with $F$ smooth but oscillating: for example, $F(z) = \sin (\lambda z)$, where $\lambda$ is so small that $\log \mu(T) \lambda < \frac{1}{2}$ and thus $|b(x,t) | \le \frac{1}{2}$. Now as $|x - \phi(0, t) | \rightarrow 0$, $\mathcal{M} (|x - \phi(0, t) | ) \rightarrow \infty$ and thus $\theta(x,t)$ changes sign like Topologist's sine curve as $x$ approaches $\phi(0,t)$, up to time $0 \le t \le T$. 
\end{remark}

\subsection{Sharpness of the linear results}\label{sharp}
We note that Theorem \ref{thmnormprop2} is sharp in the following sense: if $F(z)$ is allowed to grow super-linearly in $z$, then the structures \eqref{theta0data} will not be propagated for some Osgood flows.
 To see this, from \eqref{bLag}, it suffices to estimate
\be
F(\mathcal{M} ( |\phi_t ^{-1} (x) - x_0| ) - F(\mathcal{M} (|x - \phi(x_0, t) | ).
\ee
Here, suppose that $x_0=0$ is a stagnation point, so that $\phi(x_0, t) = 0$ for all $t\in \mathbb{R}$.  Suppose also $F$ is superlinear so that for any $C>0$, there exists $\varepsilon>0$ such that 
\be
F'(\mathcal{M}(r) ) \ge C 
\ee
for any $0<r<\varepsilon$. Also denote $\phi_t ^{-1} (x) = y$ so that $x = \phi(y, t) = \phi_t (y)$. Then for arbitrary $C>0$ and $|y|$ sufficiently close to $0$ accordingly, we have
\begin{align}\nonumber
F(\mathcal{M} ( |\phi_t ^{-1} (x) - x_0| )) - F(\mathcal{M} (|x - \phi(x_0, t) | )) &= \int_{|\phi_t (y) | } ^{|y|} \frac{F'(\mathcal{M}(r) )}{L(r) } \rmd r \\
& \ge C \int_{|\phi_t (y) | } ^{|y| } \frac{ \rmd r } {L(r) } = C\Big( \mathcal{M} (|\phi_t (y) | ) - \mathcal{M} (|y| ) \Big).\label{fmbd}
\end{align}
Here are two cases:
\begin{enumerate}
\item \emph{(Lipschitz setting)}
Consider the 2D hyperbolic flow $u(x,y) = (y,x)$.  This vector field is Lipschitz, and thus $L(z)=z$ and $\mathcal{M}(z) = \log (1/z)$.   
Here, one may choose $y$ so that $\phi_t (y) = e^{-t} y.$ For such $y$, we have
\be \label{Lipcond}
\mathcal{M} (|\phi_t (y) | ) - \mathcal{M} (|y| ) = t.
\ee
Thus, for any $C>0$ and $t>0$, one can choose $x$ so that $F(\mathcal{M} ( |\phi_t ^{-1} (x) - x_0| ) - F(\mathcal{M} (|x - \phi(x_0, t) | ) \ge Ct$.  It follows that for any $t\neq 0$, the remainder is infinite $\|b(t) \|_{L^\infty} = \infty$.

\item \emph{(log-Lipschitz setting)}
Here $L(z) = z \log (1/z)$ and $\mathcal{M} (z) = \log\log(z)$. Thus, we have 
\be
\mathcal{M} (|\phi_t (y) | ) - \mathcal{M} (|y| ) = \log\frac{\log |\phi_t (y) |}{\log|y| }.
\ee
In order to obtain a counterpart of \eqref{Lipcond}, we need $\frac{\log |\phi_t (y) | }{\log |y| } \ge C(t)$ independent of $y$. On can choose the Bahouri-Chemin vortex with velocity field $u=\nabla^\perp \psi$ where $\psi$ solves $\Delta \psi  = {\rm sgn}(x){\rm sgn} (y)$ on domain $\mathbb{T}^2$, see e.g. \cite{BCD11,D15}.  The $y$-coordinate axis is invariant for the correspond flow, and there exists $0<C_1<C_2<\infty$ such that
\be
C_1 |y|^{e^{-t}} \leq |\phi_t (y) | \le C_2 |y|^{e^{-t}},
\ee
so we have
\be
\frac{\log |C_2|}{\log |y|}+ e^{-t}   \leq  \frac{\log |\phi_t (y) |}{\log|y| } \leq \frac{\log |C_1|}{\log |y|} + e^{-t} .
\ee
It follows that for all $|y|$ sufficiently small (depending on $t$), we have
\be
\tfrac{1}{2} e^{-t}   \leq  \frac{\log |\phi_t (y) |}{\log|y| } \leq 2 e^{-t}. 
\ee
As such, for such $y$ we have that 
\be
\mathcal{M} (|\phi_t (y) | ) - \mathcal{M} (|y| )>\tfrac{1}{2} t.
\ee
In view of \eqref{fmbd}, the propagation of singularities result is sharp for log-Lipschitz vector fiel\rmd s.  
\end{enumerate}
Similar examples can be used to show the sharpness of our Theorem for a general Osgood modulus of continuity. 

\section{Propagation of Singularities in 2D Euler}\label{seceuler}

We now establish the main application of these ideas to nonlinear transport equations. We focus on the 2D Euler equation, governing the motion of a perfect (inviscid and incompressible) fluid, set on domains without boundary for simplicity, i.e. $\mathbb{R}^2$ or $\mathbb{T}^2$.  In vorticity form these read \eqref{ee1}--\eqref{ee2}.
Here we prove the propagation of  a $\log\log_+ (1/|x|)$ vortex (along with slightly more mild structures) to bounded perturbations.  This results allows for the derivation of a system of loglog singular vortices interacting. This system is akin to the vortex-wave system for point vortices \cite{MP91}.  Using the ideas in  \cite{CDE2019,CCS21}, one can show that such solutions  quantitatively (with a rate of convergence) stable in inviscid limit from the Navier-Stokes equation.

\begin{remark}
We remark that, philosophically, the roles of $L$ and $\mathcal{M}$ have reversed passing from \S \ref{lintrans} to \S \ref{seceuler}.
 In \S \ref{lintrans}, we specified a modulus $L$ for the velocity field and studied the singularities $\mathcal{M}$ (related to $L$ via $\mathcal{M}'(z) = -{1}/{L(z)}$) that are propagated.  In \S \ref{seceuler}, we specify a singular structure $\mathcal{M}$ at the level of the vorticity, which induces some Osgood velocity, and propagate it up to lower-order remainders.   The key point is that the Osgood modulus induced by the Biot-Savart law does not correspond to $L(z) = -1/\mathcal{M}'(z)$ (it is, in general, worse) and thus the results from the linear theory do not directly apply.  Instead, some cancellations occur in the nonlinear evolution equation which permits the propagation of the singular structure but generates a weak background field.
\end{remark}

We will work with the generalized Yudovich spaces $\omega_0\in Y_\Theta$:
\be\label{Ytheta}
Y_\Theta := \left\{ f \in \bigcap_{p\in [1,\infty)} L^p \ : \ \|f\|_{Y_\Theta} = \sup_{p\in [1,\infty)} \frac{ \|f\|_{L^p}}{\Theta(p)} <\infty\right\},
\ee  
in which there exist a \emph{unique} weak solution to the Euler equation \cite{Y95} provided 
\be\label{thetaint}
\int^{+\infty} \frac{\rmd p}{p \Theta (p)} = +\infty.
\ee
Having $\omega\in Y_\Theta$ induces by the Biot-Savart law an \emph{Osgood continuous} velocity field $u=K[\omega]$ enjoying the modulus of continuity: for $|x-y|$ sufficiently small,
\be\label{urbd}
|u(x,t)-u(y,t)| \lesssim  C|x-y| \log(1/|x-y|) \psi_\Theta(|x-y|)
\ee
where  $ \psi_\Theta(z) = \Theta(\log (1/z))$.
See Theorem 1.6 in \cite{CS21}.

\subsection{Propagation of $\log\log(1/|x|)$ vortices}

Let $L: (0,1) \rightarrow \mathbb{R}^+$ be any Osgood modulus of continuity satisfying 
\be \label{continuityconstraint}
 z \log(1/z) \lesssim  L(z).
\ee
Let  $\mathcal{M}$ be corresponding to $L$ and given by \eqref{mdef}.  For example, we can have 
\be
\mathcal{M}(z)= \log_{1+n}(1/z) \qquad \text{ for any } \qquad n\in \mathbb{N}.
\ee
The most singular vortex consider corresponds to $\mathcal{M}(z)=\log\log_+(1/z)$.   See Fig. \ref{logvort} for a cartoon.

First note that the vorticity of the form $\mathcal{M} (|x | )$ is in the generalized Yudovich space \eqref{Ytheta}  $Y_\Theta$ for $\Theta (p)= \log p$.  To see this, note that since \eqref{continuityconstraint} holds, we have
\be
\mathcal{M}(z) = \int_{z} ^{\mathsf{m}_L} \frac{\rmd r}{L(r) } \lesssim \int_{z} ^{\mathsf{m}_L} \frac{\rmd r}{r \log (1/r) } = \int_{\log (1/\mathsf{m}_L )} ^{\log (1/z)} \frac{\rmd s}{s} = \log \log (1/z) - \log \log (1/\mathsf{m}_L ).
\ee
However, it is known that $x \rightarrow g(x) := \log \log (1/|x|) \chi_{B_1 (0) } (x)$ has 
\be
\| g \|_{L^p  (B_1)} \sim \log p
\ee
for large $p$ (see Example 3.3 of Yudovich's paper \cite{Y95}), so $\mathcal{M}(|x|)$ belongs to the generalized Yudovich space with $\Theta(p) = \log p$.  See Lemma \ref{lemmayudo} herein for a slightly more general statement.

\begin{figure}[h!]
\centering
\includegraphics[scale=.3]{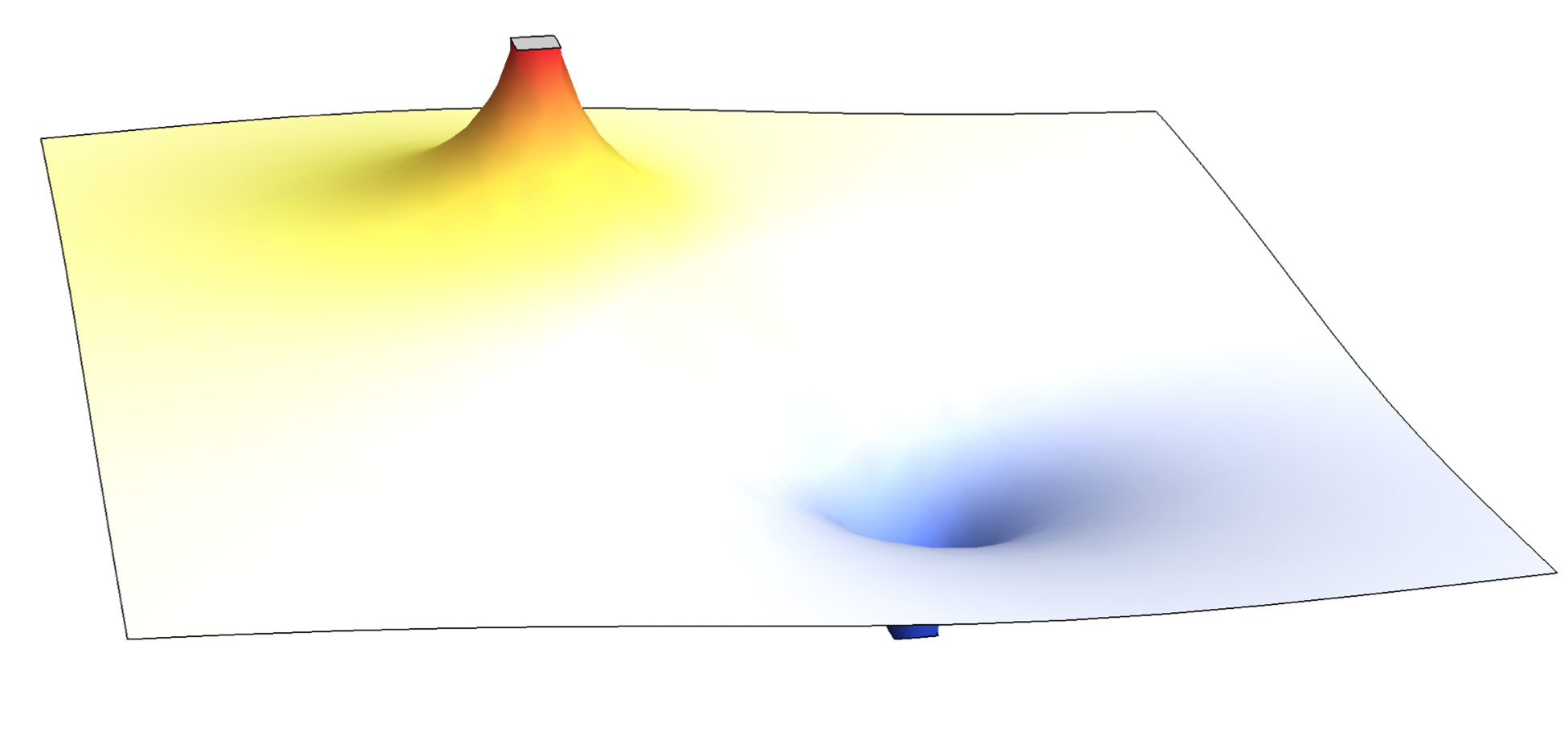}
\caption{Cartoon of double-logarithmically singular vortices.}
\label{logvort}
\end{figure}  

We prove that $\mathcal{M}$--type singularities in the vorticity are preserved under Euler evolution.

\begin{theorem}[propagation of singularities for 2D Euler]\label{2dEpropsing}
Let $\Theta(p)= \log_k(p)$ for $k\geq 0$. Consider  $b_0\in Y_{\Theta}(\mathbb{R}^2)\cap L^1(\mathbb{R}^2)$, forcing $f\in L^1_{loc}(\mathbb{R}; Y_{\Theta}(\mathbb{R}^2)\cap L^1(\mathbb{R}^2))$  and  parameters $L$ and $\mathcal{M}$ be as above.  Consider initial data of the form
\begin{equation}\label{om0data}
\omega_0 (x) = \mathcal{M} (|x | ) + b_0(x).
\end{equation}
Then, the unique weak solution $\omega(t)$ of 2D Euler equations \eqref{ee1}--\eqref{ee2} with forcing $f$ emanating from $\omega_0$
 takes the following form: there exists a scalar function $b \in L_{loc}^\infty(\mathbb{R}; Y_{\Theta}\cap L^1(\mathbb{R}^2))$  and a map $\phi_*(t): \mathbb{R}\to \mathbb{R}^2$ such that 
\begin{equation}\label{omsolution}
\omega(x,t) = \mathcal{M} (|x - \phi_*(t) |  ) + b(x,t).
\end{equation}
Thus, the  singularity propagates over time, up to a bounded correction.
\end{theorem}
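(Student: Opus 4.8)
The plan is to exploit the Lagrangian structure of the 2D Euler equation: since $\omega_0\in Y_\Theta$ with $\Theta(p)=\log_k p$ (noting $\mathcal{M}(|x|)\in Y_{\log p}\subset Y_{\log_k p}$ by the computation preceding the theorem and Lemma \ref{lemmayudo}), Yudovich's theorem \cite{Y95} gives a unique weak solution $\omega(t)$ whose associated velocity $u=K[\omega]$ is Osgood continuous with the modulus \eqref{urbd}, and $\omega(x,t)=\omega_0(\phi_t^{-1}(x))$ along the (measure-preserving homeomorphism) flow $\phi_t$ of $u$. I would then define $\phi_*(t):=\phi(0,t)$ (or $\phi(x_0,t)$ if the singularity is at $x_0$), which is $C^1$ in $t$ because $u$ is continuous and $\phi_*'(t)=u(\phi_*(t),t)$. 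The candidate remainder is, exactly as in \eqref{bLag},
\begin{equation}
b(x,t) = \big(\mathcal{M}(|\phi_t^{-1}(x)|) - \mathcal{M}(|x-\phi_*(t)|)\big) + b_0(\phi_t^{-1}(x)),
\end{equation}
so that \eqref{omsolution} holds by construction; the entire content is to show $b(\cdot,t)\in Y_\Theta\cap L^1$ with locally-in-time bounded norm.

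The $L^1$ and $Y_\Theta$ control of the term $b_0(\phi_t^{-1}(x))$ is immediate: $\phi_t$ is measure preserving, so $\|b_0\circ\phi_t^{-1}\|_{L^p}=\|b_0\|_{L^p}$ for every $p$, hence $\|b_0\circ\phi_t^{-1}\|_{Y_\Theta}=\|b_0\|_{Y_\Theta}$ and likewise in $L^1$. The forcing $f$ is handled by Duhamel/the same transport argument, contributing $\int_0^t f(\phi_{t,s}^{-1}(x),s)\,ds$ to $b$, which lies in $Y_\Theta\cap L^1$ with norm $\le\int_0^t\|f(s)\|_{Y_\Theta\cap L^1}\,ds$ again by measure preservation. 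So the crux is the \emph{difference term} $D(x,t):=\mathcal{M}(|\phi_t^{-1}(x)|)-\mathcal{M}(|x-\phi_*(t)|)$. For its $L^\infty$-type bound I would run the Osgood argument of Theorem \ref{thmnormprop}: applying \eqref{Ballestimate}/\eqref{Osg} with the velocity's own modulus gives $|\mathcal{M}(|\phi_t^{-1}(x)|)-\mathcal{M}(|x-\phi_*(t)|)|\le \int_0^t\|u(s)\|_L\,ds=\log\mu(t)$ for $x$ near $\phi_*(t)$, uniformly — this is the cancellation alluded to in the Remark, and it is why the result works even though the Biot–Savart modulus is worse than $-1/\mathcal{M}'$. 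Away from $\phi_*(t)$, both $\mathcal{M}$ terms are individually bounded (since $|x-\phi_*(t)|$ and $|\phi_t^{-1}(x)|$ stay bounded below on compact sets, using that $\phi_t$ is a homeomorphism fixing no mass at infinity), and the gradient of $\mathcal{M}(|\cdot|)$ decays, so $D$ is bounded there too; combined with compact support considerations (or the $L^1\cap L^\infty$ input on $b_0$) one gets $D\in L^\infty\cap L^1$, hence $D\in Y_\Theta$.

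The main obstacle I anticipate is making the difference-term estimate \emph{global and uniform} rather than merely local near the singular point: Theorem \ref{thmnormprop2} only controls $b$ on a shrinking ball $B_{R^{-1}(\mu(t)^{-1}R(r))}(\phi_*(t))$, and one must patch the near-singularity Osgood bound together with the far-field behavior of $\mathcal{M}(|\cdot|)$ and of the flow on all of $\mathbb{R}^2$, controlling how $\phi_t^{-1}$ distorts the level sets of $|x-\phi_*(t)|$ at moderate and large scales where $\mathcal{M}$ is smooth but where $u$ is merely Osgood (not Lipschitz), so that trajectories can spread. A clean way around this is to write $\mathcal{M}(|y|)=\mathcal{M}_{\mathrm{sing}}(|y|)+\mathcal{M}_{\mathrm{reg}}(|y|)$ with $\mathcal{M}_{\mathrm{sing}}$ compactly supported near $0$ (absorbing the far part into a bounded, compactly-modified $b_0$-type term, which is legitimate since globally $\mathcal{M}(|x|)\notin L^1$ anyway and the theorem is really a statement about the local germ), and then only the Osgood ball-distortion estimate near $\phi_*(t)$ is needed for $\mathcal{M}_{\mathrm{sing}}$, while $\mathcal{M}_{\mathrm{reg}}\circ|\cdot-\phi_*(t)|$ versus $\mathcal{M}_{\mathrm{reg}}(|\phi_t^{-1}(\cdot)|)$ is compared using continuity of $\phi_t^{-1}$ and uniform continuity of $\mathcal{M}_{\mathrm{reg}}$. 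Finally one checks $\phi_*\in C^1$ and assembles the norm bound $\|b(t)\|_{Y_\Theta\cap L^1}\lesssim \|b_0\|_{Y_\Theta\cap L^1}+\int_0^t\|f(s)\|_{Y_\Theta\cap L^1}\,ds+[\mathcal{M}']\log\mu(t)$, with $\mu(t)$ finite since $\|u(t)\|_L$ is controlled by $\|\omega(t)\|_{Y_\Theta}=\|\omega_0\|_{Y_\Theta}$ via \eqref{urbd}.
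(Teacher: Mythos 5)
There is a genuine gap at the heart of your argument: the bound on the difference term $D(x,t)=\mathcal{M}(|\phi_t^{-1}(x)|)-\mathcal{M}(|x-\phi_*(t)|)$. You propose to get $|D|\le\log\mu(t)$ by "applying \eqref{Ballestimate}/\eqref{Osg} with the velocity's own modulus." But \eqref{Osg} controls increments of $\mathcal{M}_u$, the function built from the modulus $L_u$ of the \emph{full} Biot--Savart velocity $u=K[\omega]$, not increments of $\mathcal{M}$. By \eqref{urbd}, the singular part of the vorticity alone forces $L_u(z)\sim z\log(1/z)\log\log(1/z)$ in the extremal case $\mathcal{M}(z)=\log\log_+(1/z)$, so $\mathcal{M}_u\sim\log_3(1/z)$ while $\mathcal{M}\sim\log_2(1/z)$; that is, $\mathcal{M}=F(\mathcal{M}_u)$ with $F$ essentially exponential, which is exactly the superlinear regime where Theorem \ref{thmnormprop2} is shown to \emph{fail} in \S\ref{sharp}. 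Equivalently, along characteristics $|\tfrac{\rmd}{\rmd s}\mathcal{M}(|\phi_s(y)-\phi_s(0)|)|\le \|u(s)\|_{L_u}\,L_u(r)/L(r)$ with $L_u(r)/L(r)$ unbounded as $r\to0$, so the naive Lagrangian comparison under the full flow does not close even for $k=0$. This is precisely the trap flagged in the Remark at the start of \S\ref{seceuler}; invoking "the cancellation alluded to in the Remark" does not supply it.

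The missing idea is the actual cancellation the paper uses: since $\omega_{\textsf{s}}=\mathcal{M}(|x-\phi_*(t)|)$ is radial about $\phi_*(t)$, its Biot--Savart field $u_{\textsf{s}}$ is azimuthal, so $u_{\textsf{s}}\cdot\nabla\omega_{\textsf{s}}=0$ and the radial component of the velocity difference --- the only component that changes $|x-\phi_*(t)|$ and hence $\mathcal{M}$ --- involves only $u_{\textsf{r}}=K[b]$. The paper therefore derives the transport equation \eqref{remaindereqn} for $b$ with source $g=\mathcal{M}'(|x-\phi_*|)\,\frac{(x-\phi_*)\cdot(u_{\textsf{r}}(\phi_*)-u_{\textsf{r}}(x))}{|x-\phi_*|}+f$, which by \eqref{continuityconstraint} and \eqref{bndM} is pointwise bounded by $\Theta(\log(1/|x-\phi_*|))$; Lemma \ref{lemmayudo} then places $\int_0^t g$ in $Y_\Theta$. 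Note that only for $k=0$ is the remainder actually in $L^\infty$: for $k\ge1$ the uniform bound on $D$ you claim is false, and the correct conclusion is $b\in Y_\Theta$, which your argument is not set up to produce. A bootstrap is also still required (the modulus of $u_{\textsf{r}}$ depends on the $Y_\Theta$ bound for $b$ one is trying to prove), which your outline does not address. Your handling of $b_0\circ\phi_t^{-1}$, of the forcing, and of the far field is fine, but without isolating the radial component via the $u_{\textsf{s}}\cdot\nabla\omega_{\textsf{s}}=0$ structure, the central estimate does not go through.
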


\begin{proof}
Note that $\Theta(p)= \log_k(p)$ satisfies the condition \eqref{thetaint}.
We decompose this solution as
\be\label{omform}
\omega(x,t) = \omega_{\textsf{s}} (x,t) + b(x,t),\qquad \omega_{\textsf{s}} (x,t) :=\mathcal{M} (|x-\phi_*(t)|)
\ee
where $b$ and $\phi_*$ are to be determined.  We aim to show that   $b(t)\in Y_\Theta \cap L^1$ for all $t>0$.  If established and we let $u_{\textsf{r}}:=\nabla^\perp \Delta^{-1} b$, then according to the estimate \eqref{urbd} is Osgood continuous.  The map $\phi_*(t)$ will denote the position of particle at time $t$ which was originally located at $0$ and flowed by the regular part of the velocity field $u_{\textsf{r}}$.   We will work under the assumption that $b$ is bounded which, as we will see, can be bootstrapped. The full velocity field has contribution also from the singular part $u_{\textsf{s}} =\nabla^\perp \Delta^{-1}  \omega_{\textsf{s}} $:
$$
u(x,t) =u_{\textsf{s}} (x,t)  + u_{\textsf{r}} (x,t),
$$
 but since $ \omega_{\textsf{s}}\in Y_{\log(p)}$, the velocity field $u_{\textsf{s}} (x,t)$ has modulus of continuity $r \mapsto r \log (e/r) \log \log (e/r)$ for small $r>0$ and thus $u$ is Osgood continuous (see e.g. \cite{CS21}).

We now show that the above picture hol\rmd s. Inserting the ansatz \eqref{omform} into  \eqref{ee1}--\eqref{ee2} we obtain
$$
f=(\partial_t + (u_{\textsf{s}}+ u_{\textsf{r}}) \cdot \nabla) \omega(x,t)  = (\partial_t +(u_{\textsf{s}}+ u_{\textsf{r}}) \cdot \nabla) b(x,t) + (\partial_t +(u_{\textsf{s}}+ u_{\textsf{r}})  \cdot \nabla) \omega_{\textsf{s}}.
$$
Now note that, since $\omega_{\textsf{s}}(x,t)$ is a radial function, its associated velocity $u_{\textsf{s}}$ is a circular flow, i.e. $u_{\textsf{s}}(x,t)\propto x^\perp$.  It follows that there is no self-interaction $u_{\textsf{s}}\cdot\nabla \omega_{\textsf{s}}=0$.  Thus, we have
\begin{align}\nonumber
(\partial_t + (u_{\textsf{s}}+ u_{\textsf{r}})  \cdot \nabla)\omega_{\textsf{s}}&= (\partial_t + u_{\textsf{r}}  \cdot \nabla) \omega_{\textsf{s}}\\\nonumber
 &= \frac{(x-\phi_*(t))\cdot (\partial_t + u_{\textsf{r}} \cdot \nabla)( x-\phi_*(t)) }{|x-\phi_*(t)|}\mathcal{M}'(|x-y(t)|)\\
&=  \frac{(x-\phi_*(t))\cdot (- \dot{\phi_*}(t)+ u_{\textsf{r}}(x,t)) }{|x-\phi_*(t)|}\mathcal{M}'(|x-y(t)|).
\end{align}
Since  $u_{\textsf{r}}$ is Osgood, we can define the map $\phi_*:\mathbb{R}\to \mathbb{R}^2$ to be the unique solution of 
\be
\frac{\rmd}{\rmd t} {\phi_*}(t) = u_{\textsf{r}}({\phi_*}(t),t), \qquad \phi_*(0)=0.
\ee
Thus we obtain the following equation for the remainder $b$:
\be\label{remaindereqn}
(\partial_t +  (u_{\textsf{s}}+ u_{\textsf{r}})  \cdot \nabla) b(x,t) =\frac{(x-{\phi_*}(t))\cdot ( u_{\textsf{r}}({\phi_*}(t),t)- u_{\textsf{r}}(x,t)) }{|x-{\phi_*}(t)|}\mathcal{M}'(|x-{\phi_*}(t)|) + f =: g(x,t).
\ee
The equation for the remainder \eqref{remaindereqn} has a unique weak solution and we express the solution by
\be
b(x,t) = b_0 (\phi_t ^{-1} (x)) + \int_0 ^t g (\phi_s (\phi_t ^{-1} (x) ), s) \rmd s.
\ee
Since $\omega_0\in L^1$ and $\mathcal{M}(z)\in L^1$, the solution satisfies the estimate
\be
\|b(t)\|_{L^1} \leq C + \|b_0\|_{L^1}.
\ee
Denoting $r_{s,t,x} := |\phi_s (\phi_t ^{-1} (x) ) - \phi_* (s) |$ 
and recalling from \eqref{urbd}  the  modulus of continuity for $u_{\textsf{r}}$ is
\be
L_b(z):= Cz \log(1/z)  \Theta( \log (1/z)),
\ee
 we have 
\be\label{gbound}
|g(\phi_s (\phi_t ^{-1} (x)) ,s) | \le \| u_{\textsf{r}} \|_{L_b} L_b (r_{s,t,x} ) |\mathcal{M} ' (r_{s,t,x} )|.
\ee
Note also that our assumption \eqref{continuityconstraint} on $L$ guarantees that 
\be\label{bndM}
z \log(1/z)  |\mathcal{M}'(z)| \lesssim 1.
\ee
This implies that $|g(\phi_s (\phi_t^{-1} (x)), s) | \lesssim  \Theta(\log (1/r_{s,t,x}))$. Finally, we compute $\| b \|_{L^p}$.
\begin{align}\nonumber
\| b (t)  \|_{L^p } &\le \| b_0 \circ \phi_t ^{-1} \|_{L^p}  + \int_0 ^t \| \Theta(\log(1/ r_{s,t,x})) \|_{L^p} \rmd s\\
&\le \| b_0 \|_{L^p}  + t \| \Theta( \log (1/|\cdot|)) \|_{L^p}\label{bound}
\end{align}
by volume preservation.   Note that for $\Theta(p)= \log_k(p)$ for $k\geq 1$, we have $\| \Theta( \log (1/|\cdot|)) \|_{L^p}= \ \log_{k+1}(p) \|_{L^p}=\log_k(p)= \Theta(p)$ by Lemma \ref{lemmayudo}.  If $k=0$ and $\Theta(p)=1$, then we control the $L^\infty$ norm. Since  $\| b_0 \|_{L^p}\leq \Theta(p)$  and $\| \Theta( \log (1/|\cdot|)) \|_{L^p}\leq \Theta(p)$, the assumption on remainder bounded in a suitable Yudovich space can be bootstrapped and the claim is proved.
\end{proof}

We now prove the Lemma to show that the $\log_k$ are in Yudovich classes:
\begin{lemma}\label{lemmayudo}
For $n\geq 1$, the $L^p$ norms of iterated logarithmic singularities grow as follows:
\be
\|\log_n(\cdot)\|_{L^p} \lesssim \log_{n-1}(p)
\ee
where $\log_0(p)=p$ by convention.
\end{lemma}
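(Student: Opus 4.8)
The plan is to pass to polar coordinates, reduce the statement to a one–dimensional integral estimate, and then split that integral into a bulk region, where the integrand is pointwise controlled by $\log_{n-1}(p)$, and a ball around the origin which -- crucially -- turns out to be \emph{super-polynomially} small in $p$. First I would observe that only the behaviour near the origin is at issue: interpreting $\log_n(\cdot)$ as $x\mapsto\log_n(1/|x|)$ restricted to a small ball $B_\rho\subset\mathbb{R}^d$ on which it is positive (the bounded remainder away from $B_\rho$ contributes $O(1)\lesssim\log_{n-1}(p)$), and using $r^{d-1}\le 1$ on $(0,\rho)$, it suffices to prove
\be
\int_0^\rho\big(\log_n(1/r)\big)^p\,\rmd r\ \le\ \big(C\log_{n-1}(p)\big)^p \qquad\text{for all large } p,
\ee
with $C=C(n,\rho)$; the general $\mathbb{R}^d$ bound then follows by multiplying by $|\mathbb{S}^{d-1}|$ and extracting $p$-th roots, while bounded ranges of $p$ are trivial since iterated logarithms are locally $L^p$ for every $p$.

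The base level $n=1$ I would treat directly: the substitution $r=e^{-t}$ turns the integral into $\int_{\log(1/\rho)}^\infty t^p e^{-t}\,\rmd r\le\Gamma(p+1)$, and Stirling's formula gives $\Gamma(p+1)^{1/p}\lesssim p=\log_0(p)$. For $n\ge 2$ I would argue directly (no induction is actually needed): split $(0,\rho)$ at the radius $r_p$ determined by $\log_n(1/r_p)=2\log_{n-1}(p)$, i.e. $u_p:=\log(1/r_p)=\exp_{n-1}\!\big(2\log_{n-1}(p)\big)$. On the outer piece $(r_p,\rho)$ one has $\log_n(1/r)\le 2\log_{n-1}(p)$ pointwise, so its contribution to the integral is at most $\rho\,(2\log_{n-1}p)^p$. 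On the inner piece $(0,r_p)$ I would substitute $r=e^{-u}$, so that $\int_0^{r_p}(\log_n(1/r))^p\,\rmd r=\int_{u_p}^\infty(\log_{n-1}u)^p e^{-u}\,\rmd u=\int_{u_p}^\infty e^{\,p\log_n(u)-u}\,\rmd u$; since a routine comparison of tower functions gives $u_p\ge p^2$ and $\log u_p\le p/2$ for $p$ large (hence $p\log_n(u)\le p\log u\le u/2$ for all $u\ge u_p$, the map $u\mapsto u/2-p\log u$ being nonnegative at $u_p$ and increasing thereafter), this is bounded by $\int_{u_p}^\infty e^{-u/2}\,\rmd u=2e^{-u_p/2}\le 2$. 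Adding the two contributions and using subadditivity of $t\mapsto t^{1/p}$ for $p\ge 1$ yields the claim with, say, $C=3$.

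The main obstacle is precisely the inner-ball estimate: one must make quantitative the fact that $\log_n(1/r)$ grows so slowly near the origin that the region actually carrying the singularity contributes only $O(1)$, and this rests on the observation that the split radius $r_p$ is not merely small but satisfies $\log(1/r_p)\gg p$. Put differently, the calibration of the split -- at the level $\log_n(1/r_p)=2\log_{n-1}(p)$ rather than at some coarser or finer level -- is exactly what produces the correct rate $\log_{n-1}(p)$ (so that $n=2$ recovers the known $\|\log_2(1/|\cdot|)\|_{L^p}\sim\log p$), rather than the lossy bound of order $p$ one is forced into at the base level $n=1$. Everything else is bookkeeping with iterated logarithms and exponentials, together with the genuinely separate Gamma/Stirling computation at $n=1$.
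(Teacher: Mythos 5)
Your proof is correct, but it takes a genuinely different route from the paper's. The paper's argument is entirely pointwise: it iterates the elementary inequality $|\log x|\le m\,x^{-1/m}$ to obtain, at each level, a bound of the shape $\log_n(1/|x|)\lesssim \log_{n-1}(m)\,|x|^{-1/(m-1)}$, integrates the resulting power of $|x|$, and then optimizes the free parameter $m\sim p$; it carries this out explicitly for $n=1,2,3$ and asserts that the pattern continues. You instead split the domain at the calibrated radius $r_p$ with $\log_n(1/r_p)=2\log_{n-1}(p)$, use the trivial pointwise bound on the outer annulus, and show by a Laplace-type tail estimate (namely $p\log_n u\le p\log u\le u/2$ for $u\ge u_p$, which follows from $u_p\ge p^2$ and $\log u_p\le p/2$) that the inner ball contributes only $O(1)$. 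What your version buys is a single uniform argument for all $n\ge 2$ (no level-by-level iteration and no ``pattern continues'' step left to the reader), together with a transparent explanation of where the rate $\log_{n-1}(p)$ comes from: the mass of the $L^p$ norm lives where the function is already of that size, and the genuinely singular core is super-polynomially negligible. What the paper's version buys is brevity at each fixed level and a reusable pointwise inequality $\log_n(1/|x|)\lesssim_n |x|^{-1/(m-1)}\log_{n-1}(m)$ that can be quoted elsewhere. Two cosmetic remarks: in the $n=1$ case the transformed integral should read $\int t^p e^{-t}\,\rmd t$ rather than $\rmd r$; and the tower comparisons $u_p\ge p^2$ and $\log u_p\le p/2$ hold only for $p$ large depending on $n$ (for $n=2$ one has $u_p=p^2$ exactly), which is harmless since, as you note, bounded ranges of $p$ are trivial.
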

\begin{proof}
To see this, note first that for all $x\in (0,1)$, we have that 
\be
|\log(x)| \leq n x^{-\frac{1}{n}}
\ee
 for any $n>0$.  This follows simply from the fact that it holds if and only if $y \leq n e^{y/n}$, which can be verified by considering the Taylor expansion of the exponential.  This  immediately shows that 
 \begin{align}
 \| \log(1/|\cdot|)\|_{L^p(B_1(0))}^p  &= 2 \pi \int_0^1 |\log(r)|^p r \rmd r  \leq 2 \pi  n^p \int_0^1 r^{1-\frac{p}{n}}\rmd r =\frac{2 \pi  n^{p+1}}{2n-p} 
 \end{align}
 provided $2n> p$.   The bound is minimized for $n= (1+p)/2$ so that $ \| \log(1/|\cdot|)\|_{L^p(B_1(0))}\approx  p$ as $p\to \infty$. Next we consider  $\log\log$.  Since if  $x\in (0,e^{-1})$, then $1/\log(x) \in (0,1)$, we have
 \begin{align}\nonumber
|\log|\log(x)|| &\leq |\log n| \left(\frac{1}{|\log x|}\right)^{-\frac{1}{|\log n|}} =  |\log n| |\log x|^{\frac{1}{|\log n|}} \leq e |\log n| x^{{-\frac{1}{n|\log n|}}}
 \end{align}
where we used the identity $n^{\frac{1}{\log n}}=e$.  Now, for $x\in (0,1)$, using that $n\log(n)> n-1$ we have $x^{{-\frac{1}{n|\log n|}}}\leq x^{{-\frac{1}{n-1}}}$.  It follows from the above calculation that
\be
\|\log|\log(\cdot)|\|_{L^p}   \lesssim \log p.
\ee
Now for the triple log, we have
 \begin{align}\nonumber
|\log|\log|\log(x)||| &\leq  |\log_2 n| |\log_2x|^{\frac{1}{|\log_2 n|}} \leq e^{\frac{1}{|\log_2 n|}} |\log_2 n| x^{{-\frac{1}{n|\log n||\log_2 n|}}}
 \end{align}
 using  $\log(n)^{\frac{1}{\log_2 n}}=e$.   Now, for $x\in (0,1)$, using that $n\log(n)\log_2(n)> n-1$ for $n$ sufficiently large, we have $x^{{-\frac{1}{n|\log n||\log_2 n|}}}\leq x^{{-\frac{1}{n-1}}}$.  Thus
 \be
\|\log|\log|\log(\cdot)||\|_{L^p}   \lesssim \log_2 p.
\ee
The pattern is straightforward to continue.
\end{proof}

\begin{remark}[Breakdown of $\log_2(1/|x|)\log_3 (1/|x|)$ vortices?]\label{logloglogloglogrem}
In the proof of Theorem \ref{2dEpropsing}, the $\log_2$ vortex appears to be critical in the sense that we do not expect slightly more singular vortices to propagate with bounded remainders.  Indeed, note that the vortex were any more singular, then \eqref{bndM} would be slowly diverging with $z$ and the integral in the remainder may be divergent. To have a  specific  example, consider data of the form
\be\label{loglogloglogloglog}
\omega_0(x) =\log_2(1/|x|)\log_3(1/|x|) + b_0(x)
\ee
with  $b_0\in Y_{\log_k(p)}$ for some $k\geq 0$ as in Theorem \ref{2dEpropsing}.
Note first that $\log_2(1/|x|)\in Y_\Theta$ for $\Theta(p):=\log(p) \log_2(p)$. Indeed,
since $\|f g\|_{L^p} \leq \|f \|_{L^{2p}}\| g\|_{L^{2p}}$,
\be
\|\log_2(\cdot )\log_3(\cdot)\|_{L^p}  \leq \|\log_2(\cdot )|\|_{L^{2p}} \|\log_3(\cdot )|\|_{L^{2p}}  \lesssim \log (2p) \log_2 (2p),
\ee
by Lemma \ref{lemmayudo}.
 As such, according to Yudovich's theory \cite{Y95}, there exists a global unique weak solution of 2D Euler living in the space $Y_\Theta$. 
In this case, \eqref{bndM} becomes
\be\label{bndM2}
z \log(1/z)  |\mathcal{M}'(z)| = 1+ \log_3(1/z) \leq 2\log_3(1/z),
\ee
so that in the remainder estimate \eqref{bound} we must compute
\begin{align}
 \| \log_3(1/|\cdot|)\Theta( \log (1/|\cdot|)) \|_{L^p}&=  \| \log_3(1/|\cdot|) \log_{k+1}(1/|\cdot|) \|_{L^p}
\end{align}
By Lemma \ref{lemmayudo}, this is bounded by
\begin{align}
\| \log_3(1/|\cdot|) \log_{k+1}(1/|\cdot|) \|_{L^p } &\leq \| \log_3(1/|\cdot|)\|_{L^{2p} } \| \log_{k+1}(1/|\cdot|) \|_{L^{2p} } \leq \log_2(p)\log_k(p).
\end{align}
Thus, we cannot conclude that $b(t)\in Y_{\log_k(p)}$ as we did Theorem \ref{2dEpropsing} since we lose an extra factor of $ \log_2(p)$ in the above estimate.

One may try to come up with counter examples for propagation.  For example, take
 $b_0\in L^\infty$ (i.e. $b_0\in Y_{\log_k(p)}$ with $k=0$) to be the Bahouri-Chemin vorticity field:
\be\label{BCv}
b_0(x_1,x_2) = {\rm sgn}(x_1) {\rm sgn} (x_2).
\ee 
It would be very interesting to show that this type of singularity does not propagate, i.e. that the solution does not take the form
\be
\omega(x,t) \neq \log_2(1/|x-\phi_*(t) |)\log_3(1/|x-\phi_*(t)|) + b(x,t)
\ee
with a bounded $b(x,t)$. This is indeed true at the linear level, since the Bahouri-Chemin vortex \eqref{BCv} generates a log-Lipshitz flow with double exponential attraction and repulsion on the coordinate axes.  The result then follows by the result of \S \ref{sharp}.
The difficulty is that there is a stabilizing effect of the nonlinearity where the radial vortex mixes the background and, in doing so, weakens the stretching effects of the hyperbolic background.  This gives hope that there could be some ``supercritical" uniqueness results for radial vortices with sufficiently  strong singularities (perhaps even  logarithmically divergent vorticity).  However, we believe that \eqref{loglogloglogloglog} is too weak for this effect to kick in and, as such, this singular structure should breakdown immediately.
\end{remark}

We finish this section by noting that one can also propagate any number of these $\log\log$ vortices as a system akin to that of vortex-wave \cite{MP91}.  Relative to the vortex-wave system, the system involving $\log\log$ vortices has the advantage of being a true (weak) solution of the Euler equations. On the other hand, the $\log\log$ solutions general non-trivial background velocities over time.
Specifically, stating the result for bounded perturbations for simplicity, we have

\begin{theorem}[propagation of singularities for 2D Euler]\label{2dEpropsing}
Let $b_0\in L^\infty(\mathbb{R}^2)\cap L^1(\mathbb{R}^2)$ and  $L$ and $\mathcal{M}$ be as above.  Let $N\in \mathbb{N}$ and $\{ \gamma_i\}_{i=1}^N$ with $\gamma_i\in \mathbb{R}$ and $\{ x_i \}_{i=1}^N$ with $x_i\in \mathbb{R}^2$ be given.  Consider initial data of the form \begin{equation}\label{om0data2}
\omega_0 (x) =\sum_{i=1}^N \gamma_i  \mathcal{M} (|x-x_0^i | ) + b_0(x),
\end{equation}
Then, the unique weak solution $\omega(t)$ of 2D Euler emanating from $\omega_0$ takes the following form: there exists a scalar function $b \in L_{loc}^\infty(\mathbb{R}; L^\infty(\mathbb{R}^2)\cap L^1(\mathbb{R}^2))$  and  a collection of maps $\phi_i(t): \mathbb{R}\to \mathbb{R}^2$ for $i=1, \dots, N$ such that 
\begin{equation}\label{omsolution2}
\omega(x,t) =\sum_{i=1}^N \gamma_i\mathcal{M} (|x-\phi_i(t) | )  + b(x,t)
\end{equation}
where the each $\phi_j$ corresponds to the Lagrangian flow of the velocity field excluding self-interaction
\be
\frac{\rmd}{\rmd t} \phi_j(t) = K\Big[\sum_{i\neq j} \gamma_i\mathcal{M} (|x-\phi_i(t) | )  + b(x,t)\Big]\circ \phi_j(t), \qquad \phi_j(0)= x_j,
\ee
where $K=\nabla^\perp \Delta^{-1}$ is the Biot-Savart law.
Thus, the  singularities propagate over time and, to leading order, interact with each other up to a bounded correction.
\end{theorem}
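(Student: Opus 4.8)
\noindent\emph{Proof strategy.} The plan is to repeat the argument of the single‑vortex theorem above with $N$ circular vortices superimposed on a common regular background. First I would posit the decomposition $\omega(x,t)=\sum_{i=1}^{N}\omega_{\textsf{s}}^{i}(x,t)+b(x,t)$ with $\omega_{\textsf{s}}^{i}:=\gamma_i\,\mathcal{M}(|x-\phi_i(t)|)$, and set $u_{\textsf{s}}^{i}:=\nabla^\perp\Delta^{-1}\omega_{\textsf{s}}^{i}$, $u_{\textsf{r}}:=\nabla^\perp\Delta^{-1}b$, and $u:=\sum_{i}u_{\textsf{s}}^{i}+u_{\textsf{r}}$. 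Since each $\omega_{\textsf{s}}^{i}$ is radial about its own centre $\phi_i(t)$, its velocity $u_{\textsf{s}}^{i}$ is a circular flow about $\phi_i(t)$, i.e. $u_{\textsf{s}}^{i}(x,t)\cdot(x-\phi_i(t))=0$; in particular there is no self‑interaction, $u_{\textsf{s}}^{i}\cdot\nabla\omega_{\textsf{s}}^{i}=0$ (as in the single‑vortex case this identity is valid in the weak sense in spite of the mild singularity of $\mathcal{M}$). Because $\omega_{\textsf{s}}^{i}\in Y_{\log p}$ by Lemma~\ref{lemmayudo}, each $u_{\textsf{s}}^{i}$ is Osgood continuous with modulus $\lesssim r\log(e/r)\log\log(e/r)$, and — under the standing assumption, to be bootstrapped, that $b$ is bounded — $u_{\textsf{r}}$ is Osgood by \eqref{urbd}; hence $u$ is Osgood and generates a unique flow of homeomorphisms $\phi$.

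Next I would introduce the self‑excluded velocities $v_i:=u-u_{\textsf{s}}^{i}=K\big[\sum_{j\neq i}\gamma_j\mathcal{M}(|\cdot-\phi_j(t)|)+b\big]$ and \emph{define} $\phi_i$ as the unique solution of $\dot\phi_i(t)=v_i(\phi_i(t),t)$, $\phi_i(0)=x_i$ (well posed since $v_i$ is Osgood). Using $u_{\textsf{s}}^{i}\cdot(x-\phi_i)=0$ and the chain rule, the transport of each singular piece by the full velocity reduces, exactly as in the derivation of \eqref{remaindereqn}, to
\be
(\partial_t+u\cdot\nabla)\,\omega_{\textsf{s}}^{i}
=\gamma_i\,\frac{(x-\phi_i(t))\cdot\big(v_i(x,t)-v_i(\phi_i(t),t)\big)}{|x-\phi_i(t)|}\,\mathcal{M}'(|x-\phi_i(t)|).
\ee
Inserting the ansatz into \eqref{ee1}--\eqref{ee2} then leaves the linear transport equation $(\partial_t+u\cdot\nabla)b=g$ with
\be
g(x,t):=-\sum_{i=1}^{N}\gamma_i\,\frac{(x-\phi_i(t))\cdot\big(v_i(x,t)-v_i(\phi_i(t),t)\big)}{|x-\phi_i(t)|}\,\mathcal{M}'(|x-\phi_i(t)|),
\ee
which I would represent Lagrangianly as $b(x,t)=b_0(\phi_t^{-1}(x))+\int_0^t g(\phi_s(\phi_t^{-1}(x)),s)\,\rmd s$.

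The estimates then go termwise, as before. For each $i$ one has $|v_i(x,t)-v_i(\phi_i(t),t)|\le\|v_i(t)\|_{L_b}\,L_b(|x-\phi_i(t)|)$ with $L_b(z)=Cz\log(e/z)\log\log(e/z)$, the modulus constant being controlled by $\sum_j|\gamma_j|\,\|\mathcal{M}(|\cdot|)\|_{Y_{\log p}}+\|b(t)\|_{L^\infty\cap L^1}$ — hence uniform in $t\in[0,T]$ and, crucially, \emph{independent of the positions} of the centres. Combined with $z\log(1/z)|\mathcal{M}'(z)|\lesssim1$ from \eqref{bndM}, this gives $|g(x,t)|\lesssim\sum_i\log\log(e/|x-\phi_i(t)|)$ near the centres and $|g|$ bounded away from them, so $g(\cdot,t)\in L^\infty\cap L^1$ uniformly on $[0,T]$ (the $L^1$ control also using the integrability/decay of $\mathcal{M}(|\cdot|)$). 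By volume preservation, $\|b(t)\|_{L^\infty}\le\|b_0\|_{L^\infty}+C(N,\{\gamma_i\},T)$ and $\|b(t)\|_{L^1}\le\|b_0\|_{L^1}+Ct$, which closes the bootstrap and shows $b\in L^\infty_{loc}(\mathbb{R};L^\infty\cap L^1)$. To make the construction rigorous I would solve the coupled system for $(\phi_1,\dots,\phi_N,b)$ on a short interval by a contraction argument using the Osgood bound \eqref{Osg} for the flows together with the above estimate on $g$, then continue it to all times via the a priori bounds; the resulting $\omega=\sum_i\gamma_i\mathcal{M}(|\cdot-\phi_i|)+b$ lies in $Y_{\log p}\cap L^1$ and is a weak solution of 2D Euler, hence by Yudovich's uniqueness theorem \cite{Y95} it is \emph{the} weak solution emanating from $\omega_0$, which gives \eqref{omsolution2}.

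The step I expect to be the main obstacle is the case in which two centres $\phi_i(t),\phi_j(t)$ approach or collide: near $\phi_i$ the velocity $v_i$ is then genuinely only Osgood (it inherits the singularity of $u_{\textsf{s}}^{j}$), so one must verify that the termwise bound $|v_i(x)-v_i(\phi_i)|\lesssim L_b(|x-\phi_i|)$ is still the right one — it is, being merely the globally valid, translation‑invariantly bounded modulus of continuity — and that $g$ remains uniformly in $L^\infty\cap L^1$ through such an event, which again holds since $\sum_i\log\log(e/|x-\phi_i|)$ is locally integrable and bounded off the centres even when several centres coincide. The remaining points — the weak‑sense vanishing of each self‑interaction and the verification that the ansatz yields a genuine weak Euler solution — are handled exactly as in the single‑vortex theorem.
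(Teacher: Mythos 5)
Your strategy is the paper's: same decomposition, same cancellation of each self-interaction by radial symmetry, same definition of $\phi_i$ via the self-excluded velocity, same Lagrangian representation of $b$ and bootstrap. However, one step fails as written. To control the inter-vortex terms $u_j(\phi_i(t),t)-u_j(x,t)$ for $j\neq i$ you invoke the generic Yudovich modulus $L_b(z)=Cz\log(e/z)\log\log(e/z)$ coming from $\omega_j\in Y_{\log p}$ via \eqref{urbd}. Since \eqref{continuityconstraint} only guarantees $L(z)\gtrsim z\log(1/z)$, the quotient $L_b(z)/L(z)$ can be as large as $\log\log(e/z)$, and this is precisely the divergence $\sum_i\log\log(e/|x-\phi_i(t)|)$ you display for $g$. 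You then assert $g(\cdot,t)\in L^\infty\cap L^1$, which contradicts your own bound: $\log\log(e/|x-\phi_i|)$ is unbounded at $\phi_i$. With only $g\in L^1\cap Y_{\log p}$ your Gronwall argument closes a bootstrap in $Y_{\log p}$, not in $L^\infty$, whereas the theorem (and your conclusion $\|b(t)\|_{L^\infty}\le\|b_0\|_{L^\infty}+C$) requires $b\in L^\infty_{loc}(\mathbb{R};L^\infty\cap L^1)$.

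The missing ingredient is that the velocity of an \emph{explicit radial} vortex is strictly better than that of a generic $Y_{\log p}$ vorticity: writing $K[\mathcal{M}(|\cdot|)](x)=\frac{x^\perp}{|x|^2}\int_0^{|x|}s\,\mathcal{M}(s)\,\rmd s$, one checks $|\nabla K[\mathcal{M}(|\cdot|)]|\lesssim \mathcal{M}(|x|)$, hence a modulus of continuity $\lesssim z\,\mathcal{M}(z)\lesssim z\log\log(e/z)\lesssim L(z)$; in other words $\|K[\mathcal{M}(|\cdot|)]\|_{L}<\infty$. This is exactly how the paper proceeds: it splits the self-excluded velocity into $u_b$ (log-Lipschitz because $b\in L^1\cap L^\infty$, so the quotient by $L$ is bounded by \eqref{continuityconstraint}) plus the $u_j$ with modulus $L$ and constant $\|K[\mathcal{M}(|\cdot|)]\|_{L}$, yielding the pointwise bound $|g(x,t)|\le C\sum_i\gamma_i(1+\|b\|_{L^1\cap L^\infty})$, after which the $L^\infty$ Gronwall closes. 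Incidentally, this also disposes of your worry about colliding centres: the bound on each $u_j$ is a translation-invariant modulus and does not degenerate as the $\phi_i$ approach one another. The rest of your argument (definition of the $\phi_i$, weak-sense vanishing of self-interaction, identification with the unique Yudovich solution) matches the paper.
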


\begin{proof}
The proof is same as before (we set $f=0$ for simplicity, the same proof works with non-trivial forcing): again we let $b(x,t) = \omega (x,t) - \sum_{i=1} ^N \gamma_i \mathcal{M} (|x-\phi_i (t) |)$ and estimate the evolution equation of $b$. For simplicity, we denote \be \omega_i := \gamma_i \mathcal{M} (|x-\phi_i (t) | ), \quad u_i := K[\omega_i ],  \quad u_b := K[b]. \ee Since $u_i \cdot \nabla \omega_i = 0$ by the radial nature of $\omega_i$, we have
\begin{align*}
 &(\partial_t + u \cdot \nabla) b = -\sum_{i=1} ^N ( \partial_t + (u_b + \sum_{j \ne i} u_j ) \cdot \nabla ) \omega_i \\ &= -\sum_{i=1} ^N \gamma_i \mathcal{M}'(|x-\phi_i (t) | ) \frac{(x - \phi_i (t) )}{|x-\phi_i (t) | } \cdot (-\frac{\rmd}{\rmd t} \phi_i(t) ) + \gamma_i \mathcal{M}'(|x-\phi_i (t) | ) \frac{(x - \phi_i (t) )}{|x-\phi_i (t) | } \cdot (u_b + \sum_{j\ne i} u_j ) \\ &= \sum_{i=1} ^N \gamma_i \frac{1}{L(|x-\phi_i (t) | ) }  \frac{(x - \phi_i (t) )}{|x-\phi_i (t) | } \cdot \left( \sum_{j\ne i} (u_j (\phi_i (t), t) - u_j (x, t) ) + u_b (\phi_i (t), t) - u_b (x, t) \right) =: g(x, t).
 \end{align*}
We have the following bound on the forcing $g$ of the perturbation $b$:
\be\nonumber |g(x,t) | \le \sum_{i=1} ^N \gamma_i \| K[\mathcal{M} (|\cdot | )] \|_{L} + \sum_{i=1} ^N \gamma_i \| b\|_{L^1 \cap L^\infty} \frac{ |x- \phi_i (t)| \log(1/|x- \phi_i (t)|)}{L(|x-\phi_i (t) | ) } \le C \sum_{i=1} ^N (1 + \| b\|_{L^1 \cap L^\infty } ). \ee 
  As before, we have  $\| b(t) \|_{L^1} \le C + \|b_0 \|_{L^1} $ for some explicit $C>0$ and thus we once again obtain  \be \| b (t) \|_{L^\infty} \le \| b_0 \|_{L^\infty} + t (C + \| b_0 \|_{L^1} ) + C\int_0 ^t \| b(s) \|_{L^\infty} \rmd s. \ee By Gronwall's inequality, $b$ is bounded for any finite time $t \in \mathbb{R}$.
\end{proof}

\begin{remark}
Having established this result, there are a number of interesting dynamical questions that could be asked. For example, under what conditions do singular points remain in a fixed compact set of space for all time? This is related to the problem of whether vorticity which is initially compactly supported and nonnegative can have unbounded support as $t\to\infty$. 
\end{remark}

\subsection{Singularities determine the solution in the DiPerna-Majda class}\label{DMclass}

It is well known from the work of DiPerna and Majda \cite{DM87} that there exist global weak solutions in the class $\omega_0\in L^p$ for $p\geq1$, but these are not known to be unique.  In fact, if a non-autonomous force is included, Vishik proved non-uniqueness in this class for all $p\in [1,\infty)$ \cite{Vishik1,Vishik2} (see also \cite{notesVishik}).  Here we prove that, in a sense, the behavior of the singular structure determines the solution uniquely, provided $p\geq 2$. 

\begin{theorem}
Let $u_1, u_2\in C([0,T); W^{1,p}(\mathbb{R}^2))$  be two weak solutions to 2D Euler.  Assume 
\be
\|u_1-u_2\|_{\mathcal{G}^{{1}/{s}}}< \infty, \qquad \text{where} \quad s=2/p,
\ee
where $\mathcal{G}^{{1}/{s}}$ is the Gevrey space of index $s$. Then 
\be
\|u_1(t)-u_2(t)\|_{L^2}\leq \|u_1(0)-u_2(0)\|_{L^2}^{e^{- K_{*} \| \nabla u_1 \|_{L^p} t}},
\ee
where $K_*$ depends on the Gevrey norm of $u_1 - u_2$.
\end{theorem}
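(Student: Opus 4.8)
\emph{Proof sketch (proposal).} The plan is to run a Yudovich‑type $L^2$ stability estimate for the difference $w:=u_1-u_2$, using the Gevrey regularity of $w$ as a quantitative substitute for the $L^\infty$ control of $\nabla u_1$ that one would have in the classical Yudovich class but lacks here. Writing $q=p_1-p_2$ for the pressure difference, $w$ solves $\partial_t w + u_1\cdot\nabla w + w\cdot\nabla u_2 + \nabla q = 0$ with $\nabla\cdot w=0$. Since Gevrey functions lie in every Sobolev space, $w(t)$ is smooth and decaying, hence an admissible test function; pairing the equation with $w$ (after the usual mollification making the time derivative classical), using $\int (u_1\cdot\nabla w)\cdot w = 0$, $\int \nabla q\cdot w = 0$, and $\nabla u_2 = \nabla u_1 - \nabla w$ together with $\int (w\cdot\nabla w)\cdot w = 0$, one obtains the energy identity
\[
\tfrac12\tfrac{\rmd}{\rmd t}\|w\|_{L^2}^2 \;=\; -\int \nabla u_1 : (w\otimes w)\,\rmd x .
\]
Here $\|w(0)\|_{L^2}<\infty$ is part of the Gevrey hypothesis on $u_1-u_2$, and $\|\nabla u_1(t)\|_{L^p}\approx\|\omega_1(t)\|_{L^p}$ is essentially independent of $t$ (Calder\'on--Zygmund plus transport of vorticity), so it may be read as the fixed constant in the statement. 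Everything reduces to estimating $\big|\int \nabla u_1 : (w\otimes w)\,\rmd x\big|$ in terms of $\|w\|_{L^2}$.

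The heart of the matter is a Littlewood--Paley decomposition calibrated to the relation $s=2/p$. Set $\sigma:=1/s=p/2\ge 1$. Since Gevrey classes are algebras, $w\otimes w\in\mathcal{G}^{1/s}$ with $\|w\otimes w\|_{\mathcal{G}^{1/s}}\lesssim\|w\|_{\mathcal{G}^{1/s}}^2$, and trivially $\|w\otimes w\|_{L^1}=\|w\|_{L^2}^2$. Split $\nabla u_1 = P_{\le\Lambda}\nabla u_1 + P_{>\Lambda}\nabla u_1$ at a frequency $\Lambda\ge 1$ to be chosen. For the low part, H\"older and Bernstein give
\[
\Big|\int P_{\le\Lambda}\nabla u_1 : (w\otimes w)\,\rmd x\Big|\;\le\;\|P_{\le\Lambda}\nabla u_1\|_{L^\infty}\,\|w\otimes w\|_{L^1}\;\lesssim\;\Lambda^{2/p}\,\|\nabla u_1\|_{L^p}\,\|w\|_{L^2}^2 .
\]
For the high part, move the frequency projection onto the Gevrey factor, pair $L^p$ against $L^{p'}$ with $p'=p/(p-1)$, interpolate the $L^{p'}$ norm between $L^1$ and $L^2$, and use the exponential Gevrey decay $\|P_{>\Lambda}(w\otimes w)\|_{L^2}\lesssim e^{-c\Lambda^{1/\sigma}}\|w\otimes w\|_{\mathcal{G}^{1/s}}$; this produces a bound of the form $\|\nabla u_1\|_{L^p}\,\|w\|_{L^2}^{2(p-2)/p}\,\|w\|_{\mathcal{G}^{1/s}}^{4/p}\,e^{-c\Lambda^{1/\sigma}}$.

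Now choose $\Lambda^{1/\sigma}\sim\log(1/\|w\|_{L^2})$, with implicit constants depending on $p$ and $\|w\|_{\mathcal{G}^{1/s}}$. This makes the high‑frequency contribution $\lesssim\|\nabla u_1\|_{L^p}\|w\|_{L^2}^2$, while the Bernstein loss in the low‑frequency term becomes $\Lambda^{2/p}\sim\big(\log(1/\|w\|_{L^2})\big)^{2\sigma/p}=\log(1/\|w\|_{L^2})$ --- this last identity is the only place the hypothesis $s=2/p$ enters, and it is precisely what makes the estimate log‑Lipschitz rather than sub‑ or super‑critical. Combining, $\tfrac{\rmd}{\rmd t}\|w\|_{L^2}^2\le K\,\|w\|_{L^2}^2\,\log(C/\|w\|_{L^2}^2)$ with $K=K_*\|\nabla u_1\|_{L^p}$ and $K_*,C$ depending on $p$ and $\|u_1-u_2\|_{\mathcal{G}^{1/s}}$. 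Substituting $y=\|w\|_{L^2}^2$ and $u(t)=\log(C/y(t))$ turns this into $\dot u\ge -Ku$, hence $\log(C/\|w(t)\|_{L^2}^2)\ge e^{-Kt}\log(C/\|w(0)\|_{L^2}^2)$, i.e. $\|w(t)\|_{L^2}\le C^{(1-e^{-Kt})/2}\|w(0)\|_{L^2}^{e^{-Kt}}$, which is the asserted inequality up to the prefactor $C^{(1-e^{-Kt})/2}$ (equal to $1$ at $t=0$, bounded on $[0,T)$, and removable by normalizing the $L^2$ norm; if $\|w(0)\|_{L^2}\gtrsim 1$ one simply invokes the Osgood lemma in its general form).

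I expect the two main difficulties to be: (i) the bookkeeping in the second step --- making the powers of $\|w\|_{L^2}$ and of $\Lambda$ match so that the final differential inequality carries exactly one logarithm; this genuinely relies on $\sigma=p/2$, and getting it wrong yields either a useless estimate or no uniqueness at all, consistent with Vishik's non‑uniqueness; and (ii) the low‑regularity justification of the energy identity for merely weak DiPerna--Majda solutions, together with uniform‑in‑time control of $\|\nabla u_1(t)\|_{L^p}$ (essentially the conserved $\|\omega_0\|_{L^p}$ via Calder\'on--Zygmund and transport, but requiring mild care at this regularity). The remaining ingredients --- Bernstein, interpolation, the Gevrey product and decay estimates, and the Osgood integration --- are standard.
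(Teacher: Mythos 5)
Your proposal is sound and arrives at the same log-Lipschitz differential inequality
$\frac{\rmd}{\rmd t}\|w\|_{L^2}^2 \lesssim \|\nabla u_1\|_{L^p}\,\|w\|_{L^2}^2\,\log(1/\|w\|_{L^2})$
and the same Osgood integration, but by a genuinely different decomposition. The paper never uses Littlewood--Paley: it applies H\"older to get $\int |w|^2|\nabla u_1| \le \|w\|_{L^{2p/(p-1)}}^2\|\nabla u_1\|_{L^p}$, then the Sobolev embedding $H^{1/p}(\mathbb{R}^2)\hookrightarrow L^{2p/(p-1)}$ to reduce everything to $\|w\|_{\dot H^{1/p}}$, and finally the iterated interpolation $\|f\|_{\dot H^{s}} \le \|f\|_{\dot H^{2^k s}}^{2^{-k}}\|f\|_{L^2}^{1-2^{-k}}$, quantifying the Gevrey hypothesis as $\|w\|_{\dot H^{s}}\le C_*^{s+1}s^{s\eta}$ and optimizing $2^k\sim\log(1/\|w\|_{L^2})$. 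Your version instead splits $\nabla u_1$ in frequency, uses Bernstein with loss $\Lambda^{2/p}$ on the low part and the Gevrey tail decay $e^{-c\Lambda^{1/\sigma}}$ of $w\otimes w$ on the high part, with $\Lambda^{1/\sigma}\sim\log(1/\|w\|_{L^2})$; the criticality condition $s=2/p$ appears as the matching of the Bernstein and Gevrey exponents, which is arguably more transparent than the paper's condition $\eta\le p/2$ on the Sobolev-norm growth rate. The trade-offs: your route needs the algebra property of the Gevrey class (to control $\|w\otimes w\|_{\mathcal{G}^{1/s}}$) and the duality/interpolation bookkeeping you describe, whereas the paper's route is more elementary (only Plancherel-level interpolation) and makes explicit exactly which quantitative form of ``Gevrey'' is required. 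Both proofs share the same two loose ends you already flag --- justifying the energy identity at this regularity and the harmless multiplicative prefactor $C^{(1-e^{-Kt})/2}$ produced by the Osgood step, which the paper's stated inequality also silently absorbs --- so neither is a gap specific to your argument.
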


\begin{proof}
First note, by a simple energy estimate and the Sobolev embedding theorem,
\begin{align}\nonumber
\frac{1}{2}\frac{\rmd}{\rmd t}\|u_1-u_2\|_{L^2}^2 &\leq \int |u_1-u_2|^2 |\nabla u_1| \rmd x \leq  \|u_1-u_2\|_{L^{2p_*}}^2 \|\nabla u_1\|_{L^p}\leq C  \|u_1-u_2\|_{H^{1/p}}^2  \|\nabla u_1\|_{L^p}\\
&\leq C\left( \|u_1-u_2\|_{L^2}^2+  \|u_1-u_2\|_{\dot{H}^{1/p}}^2\right)  \|\nabla u_1\|_{L^p}.
\end{align}
Now observe that $\|f\|_{\dot{H}^s}^2 = \int_{\mathbb{R}^2} |\xi|^{2s} \hat{f}(\xi) \bar{\hat{f}}(\xi) \rmd \xi \leq \|f\|_{\dot{H}^{2s}} \|f\|_{L^2}$ for all $s$.  Iterating, we see that $\|f\|_{\dot{H}^s}^2 \leq  (\|f\|_{\dot{H}^{4s}} \|f\|_{L^2})^{1/2} \|f\|_{L^2}$, etc so that
\be
\|f\|_{\dot{H}^s}\leq \|f\|_{\dot{H}^{2^ks}}^\frac{1}{2^k} \|f\|_{L^2}^{1-\frac{1}{2^k}}.
\ee
Applying this to $ \|u_1-u_2\|_{\dot{H}^{1/p}}$, we see that 
\be
\|u_1-u_2\|_{\dot{H}^{1/p}}^2 \leq \|u_1-u_2\|_{\dot{H}^{2^k/p}}^\frac{2}{2^k} \|u_1-u_2\|_{L^2}^{2-\frac{2}{2^{k}}}.
\ee
Now assume that  $\|u_1-u_2\|_{\dot{H}^{s}}\leq C_*^{s+1} s^{s\eta}$ for all $s\geq 0$ and some $\eta\geq 0$.  This assumption will result in the difference being either analytic or in some Gevrey space (depending on whether $\eta=1$ or $\eta>1$ respectively). We get
\begin{align}\nonumber
\|u_1-u_2\|_{\dot{H}^{1/p}}^2 &\leq  \left(C_*^{\frac{2^k}{p} + 1} \right)^\frac{2}{2^k} \left(\left(\frac{2^k}{p}\right)^{\frac{2^k}{p} \eta} \right)^\frac{2}{2^k} \|u_1-u_2\|_{L^2}^{2-\frac{2}{2^{k}}}\leq C_*^{\frac{2}{p} +1}\left(\frac{2^k}{p}\right)^{\frac{2\eta}{p}}  \|u_1-u_2\|_{L^2}^{2}  \frac{1}{\|u_1-u_2\|_{L^2}^{\frac{2}{2^{k}}}}.
\end{align}
If $\| u_1 - u_2 \|_{L^2} > 1$, by choosing $k=1$, we have
\be
\|u_1-u_2\|_{\dot{H}^{1/p}} \leq K_{**} \|u_1 - u_2 \|_{L^2}
\ee
for some constant $K_{**}$. For $\| u_1 - u_2 \|\le 1$, choose $k$ so that 
\be
\log \frac{1}{\|u_1-u_2\|_{L^2}}  \leq 2^k \leq 4 \log \frac{1}{\|u_1-u_2\|_{L^2}}.
\ee
Then
\begin{align}\nonumber
\|u_1-u_2\|_{\dot{H}^{1/p}}^2 
&\leq C_*^{\frac{2}{p} +1}\left(\frac{4}{p}\right)^{\frac{2\eta}{p}} e^2 \|u_1-u_2\|_{L^2}^{2} \left [\log\left(  \frac{1}{\|u_1-u_2\|_{L^2}}\right) \right]^{\frac{2\eta}{p}}.
\end{align}
In conclusion 
\begin{align}\nonumber
\frac{1}{2}\frac{\rmd}{\rmd t}\|u_1-u_2\|_{L^2}^2 &\leq K_*  \|\nabla u_1\|_{L^p}  \|u_1-u_2\|_{L^2}^{2} \left ( 1 +  \left ( \left[ \log  \left(  \frac{1}{\|u_1-u_2\|_{L^2}}\right) \right ]^{\frac{2\eta}{p}}, 0 \right )_+ \right ),
\end{align}
where $(f)_+$ denotes the positive part of $f$. Now we see that if $\eta\leq \frac{p}{2}$, we have
\be
\frac{\rmd}{\rmd t} \| u_1 - u_2 \|_{L^2} \leq K_{*} \| \nabla u_1 \|_{L^p} \| u_1 - u_2 \|_{L^2} \left ( 1 + \left (\log \frac{1}{\| u_1 - u_2 \|_{L^2} } \right )_+ \right ),
\ee
and applying Osgood's lemma, we end up with the claimed bound.  Note that $\eta=1$ corresponds to analytic perturbations and $\eta>1$ corresponds to Gevrey perturbations.  
\end{proof}

Next, we show that then any limit of \emph{regularized solutions} will be smooth outside of a growing ball. In particular, possible non-uniqueness can only come from inside that ball. 
\begin{theorem}\label{ContinuationTheorem}
Let $\omega_0\in ( L^1\cap L^p)(\mathbb{R}^2)\cap C^k(\mathbb{R}^2\setminus\{0\})$ for $p>2$ and $k\geq 1$.  
Consider a family $\omega^\varepsilon\in C^\infty$ of 2D Euler solutions from regularized data $\omega_0^\varepsilon\in C^\infty$ with $C^k$ norms bounded away from $B_\delta(0)$ uniformly in $\varepsilon$ for any $\delta>0$. Let $\omega_*$ be any subsequential limit in $L^2$ of $\omega^\varepsilon$ as $\varepsilon\rightarrow 0$. Then, $\omega_*$ is a weak solution to the 2D Euler equation that is $C^k$  outside of $B_{\delta+ C t}(0)$ for some $C>0$.  
\end{theorem}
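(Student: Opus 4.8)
The plan is to combine uniform-in-$\varepsilon$ bounds on the regularized solutions, a finite-speed-of-propagation estimate, and a bootstrap for exterior regularity of the vorticity transported along the flow. First I would record that each smooth solution $\omega^\varepsilon$ conserves its $L^1$ and $L^p$ norms, so $\sup_\varepsilon\sup_t\|\omega^\varepsilon(t)\|_{L^1\cap L^p}\leq C$ with $C$ depending only on $\|\omega_0\|_{L^1\cap L^p}$ (mollification does not increase these). Since $p>2$, splitting the Biot--Savart kernel into a near part (bounded by $\|K\|_{L^{p'}(B_1)}\|\omega^\varepsilon\|_{L^p}$, finite because $p'<2$) and a far part (bounded by $\|\omega^\varepsilon\|_{L^1}$) gives a uniform bound $\|u^\varepsilon(t)\|_{L^\infty}\leq C_u$; moreover $W^{1,p}\hookrightarrow C^{1-2/p}$ gives uniform spatial H\"older continuity of $u^\varepsilon$, and the equation gives equicontinuity in time. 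By Arzel\`a--Ascoli and the standard DiPerna--Majda compactness argument, one may pass to the limit in the weak formulation along the given subsequence, so $\omega_*$ is a weak solution.

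For the regularity statement, write $\phi^\varepsilon_t$ for the (smooth, measure-preserving) flow of $u^\varepsilon$. The bound $\|u^\varepsilon\|_{L^\infty}\leq C_u$ yields $|\phi^\varepsilon_t(x)-x|\leq C_u t$, and likewise for the inverse flow, so if $|x|>\delta+C_u t$ then the backward trajectory through $(x,t)$ stays outside $B_\delta(0)$ on $[0,t]$. Combined with $\omega^\varepsilon(\cdot,t)=\omega_0^\varepsilon\circ(\phi^\varepsilon_t)^{-1}$, this already gives $\|\omega^\varepsilon(t)\|_{L^\infty(B_{\delta+C_ut}^c)}\leq\|\omega_0^\varepsilon\|_{L^\infty(B_\delta^c)}$ uniformly in $\varepsilon$ — the $k=0$ case — and identifies the constant in the theorem with $C=C_u$.

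The core is a bootstrap over $m=0,1,\dots,k$. Fix $t$ and $x_*$ with $\sigma:=|x_*|-\delta-C_ut>0$, and let $\Gamma$ be the backward characteristic from $(x_*,t)$. Differentiating the transport equation gives
\be
(\partial_t+u^\varepsilon\cdot\nabla)\nabla^m\omega^\varepsilon=\textstyle\sum_{1\leq j\leq m}c_{j}\,\nabla^{j}u^\varepsilon\,\nabla^{m+1-j}\omega^\varepsilon,
\ee
and the factors $\nabla^ju^\varepsilon$ with $j\leq m$ are estimated by the local Biot--Savart (elliptic) bound $\|u^\varepsilon\|_{C^{j}(B_\rho(z))}\lesssim_\rho\|\omega^\varepsilon\|_{C^{j-1}(B_{2\rho}(z))}+\|\omega^\varepsilon\|_{L^1}$, valid whenever $B_{2\rho}(z)$ avoids the singular ball. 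Since these involve one fewer derivative of $\omega^\varepsilon$, integrating along $\Gamma$ and applying Gr\"onwall on a finite nested family of exterior shells around $\Gamma$ — each order of differentiation costing a fixed fraction of the available room $\sigma$, of which there are only $k$ — closes the estimates and produces, for every $t$, $C^k$ bounds for $\omega^\varepsilon(t)$ on compact subsets of $B_{\delta+C_ut}^c$ that are uniform in $\varepsilon$. Arzel\`a--Ascoli and uniqueness of the $L^2$-limit then give $\omega_*(t)\in C^k(B_{\delta+C_ut}^c)$.

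The main obstacle is precisely this bootstrap: globally $\nabla u^\varepsilon\in L^p$ only (not $L^\infty$), so the commutator cannot be closed naively and one must localize carefully — ensuring the controlled region recedes from $B_\delta$ only at speed $C_u$, that the local elliptic estimates use balls clear of $B_\delta$ along the whole flow, and that the exactly-one-derivative gain in Biot--Savart makes the finitely many Gr\"onwall inequalities close. A minor point is that $\omega_0\in C^k$ (not $C^{k,\alpha}$), so transporting the top derivative needs $\nabla u^\varepsilon$ to be at worst log-Lipschitz on the good region — true there because $\omega^\varepsilon$ is bounded there — which is handled by carrying a logarithmic loss through Osgood's lemma (or by settling for a $C^{k-1,1}_{loc}$ conclusion); this is routine once the localization is set up.
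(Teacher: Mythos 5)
Your proposal is correct and follows essentially the same route as the paper: uniform $L^1\cap L^p$ bounds, a uniform $L^\infty$ bound on $u^\varepsilon$ via the near/far splitting of the Biot--Savart kernel, finite speed of propagation along the flow, and a local elliptic-plus-transport bootstrap for exterior $C^k$ regularity, followed by compactness. You merely spell out the bootstrap (differentiated equation, Gr\"onwall on nested shells) that the paper asserts in one line, and you correctly flag the $C^k$ versus $C^{k,\alpha}$ issue at the top order --- which the paper's proof glosses over --- resolving it with the standard logarithmic Biot--Savart estimate and Osgood's lemma.
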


\begin{proof}

For simplicity, set $\delta=1$ and treat the case of $ C^k$ regularity.  It is possible to propagate other types of regularity such as Sobolev $H^k$, Gevrey $\mathcal{G}^{{1}/{s}}$ and analytic.
By virtue of the transport of vorticity, we have the time and $\varepsilon$ independent bound so that
 \be\label{globalL2}
\left\|\omega^\varepsilon(t)\right\|_{L^{p}}\leq \| \omega_0^\varepsilon\|_{L^p}\lesssim \| \omega_0\|_{L^p}, \qquad \text{for all}\quad \varepsilon> 0,\  t\in \mathbb{R}.
\ee Since $\omega\in L^1\cap L^{2+}$, standard elliptic estimates on the Biot-Savart law give a uniform bound on $u$:
\be\label{velbd}\|u^\varepsilon\|_{L^\infty}\leq C(\omega_0)\qquad \text{ for all} \quad \varepsilon> 0, \ t\in \mathbb{R}.
\ee Since  ${\omega}$ is transported by the bounded velocity field \eqref{velbd}, this implies 
\be\label{localLinftybound}
\|{\omega^\varepsilon}(t)\|_{L^\infty(A)}\leq \|{\omega_0^\varepsilon}\|_{L^\infty(B_{Ct}(A))} \qquad \text{for all}\quad t\geq 0 
\ee
for any open set $A\subset \mathbb{R}^2$ where $B_{\delta}(A)$ is the set of points within distance $\delta$ of $A$.  Applying this with $A = \mathbb{R}^2\setminus B_{1+C t  }(0)$, we have $B_{Ct}(A) \subset \mathbb{R}^2\setminus B_{1}(0)$ for all $t\in \mathbb{R}$.
The global $L^2$ bound on $\omega$ from \eqref{globalL2} along with the local $L^\infty$ bounds  \eqref{localLinftybound} gives local propagation of regularity.  More precisely, for $x\in  \mathbb{R}^2\setminus B_{1+Ct}(0)$, we have
\be
u (x,t) = \int_{ \mathbb{R}^2} K(x-y) \omega(y,t) (1-\chi(x-y)) \rmd y+ \int_{\mathbb{R}^2} K(y) \omega(x-y,t) \chi(y) \rmd y =: u_1 + u_2,
\ee
where $\chi(y)$ is a smooth radially decreasing function which is $1$ for $y\leq 1$ and $0$ for $y>2$.
Note 
\begin{align}
\| u_1\|_{C^{k+1}( \mathbb{R}^2\setminus B_{1+Ct}(0))}  &\leq \| \partial^{k+1} (\chi K)(\cdot) \|_{L^\infty([1,\infty))}\|\omega\|_{L^1(\mathbb{R}^2)}, \\
\| u_2\|_{C^{k+1}( \mathbb{R}^2\setminus B_{1+Ct}(0))} &\lesssim \|\omega\|_{C^{k}( \mathbb{R}^2\setminus B_{1+Ct}(0))}\|\chi K\|_{L^1(\mathbb{R}^2)}.
\end{align}
It follows that $u\in C^{k+1}( \mathbb{R}^2\setminus B_{1+Ct}(0))$ with norms bounded uniformly in $\varepsilon>0$.  Thus, we may bootstrap the initial $C^k$ regularity of the vorticity field to say that $\omega\in C^k( \mathbb{R}^2\setminus B_{1+Ct}(0))$.
That $\omega_*$ is a weak solution follows from a standard compactness argument using the bound \eqref{globalL2}. 
\end{proof}

Finally, we finish with a result which quantifies the effect that non-uniqueness emanating from a localized singularity can have in the exterior.  In the following, let $\chi_r$ be a smooth, nonnegative, radially decreasing $[0,1]$-valued function such that $\chi_r (x)$ is  $1$ for $ |x| \le r$ and $0$ for $ |x| \ge r+1$.   

\begin{theorem}  
Suppose that   $\{ \omega_0 ^\varepsilon \}_{\varepsilon>0} \subset C^\infty (\mathbb{R}^2)$  satisfying
 \begin{enumerate}
\item
$ \omega_0 ^\varepsilon \rightarrow \omega_0$ in $L^p$ for some $p>2$ as $\varepsilon \to 0$,
\item  $\{  (\omega_0 ^{\varepsilon} (1-\chi_1 ) ) \}_{\varepsilon>0 }$ is uniformly bounded in $L^1\cap C^1$. 
\end{enumerate} 
Let
 $\varepsilon_1,\varepsilon_2>0$, $\omega^{\varepsilon}$ be the resulting Euler solution and 
\be
\mathcal{E}_q (\varepsilon_1, \varepsilon_2, t) = C  \|\omega^{\varepsilon_1} (t,\cdot)-\omega^{\varepsilon_2} (t,\cdot)  \|_{L^q(B_{(2C_* +2)t + 3)}(0))}, 
 \ee
 where represent $C_*$ is the uniform $L^\infty$ bound on $\{ u ^\varepsilon(t) \}_{\varepsilon>0} $ for all time, depending only on $\sup_\varepsilon \| \omega_0 ^\varepsilon \|_{L^p}$, $p>2$. 
 Then for each $t>0$,  $\varepsilon_1,\varepsilon_2>0$ and 
 $x \in \mathbb{R}^2 \setminus B_{(2 C_* + 2) t + 3 }(0)$, we have
 \be\nonumber
 | \omega^{\varepsilon_1} (x, t) - \omega^{\varepsilon_2} (x,t) |  \le C \sum_{j=1} ^2 \| (\omega^{\varepsilon_j}_0  - \omega _0) (1-\chi_1) \|_{L^\infty} + e^{t C_*''} \left ( t \sum_{j=1} ^2 \| \omega^{\varepsilon_j}_0 - \omega_0 \|_{L^p} + \int_0 ^t \mathcal{E}_p (\varepsilon_1, \varepsilon_2, s) \rmd s \right ).
 \ee
For $x \in \mathbb{R}^2 \setminus B_{3(C_* + 1) t + 4 } (0)$ (slightly further away), we have
\begin{align}\nonumber
| \omega^{\varepsilon_1} (x, t) - \omega^{\varepsilon_2} (x,t) | &\le C \sum_{j=1} ^2 \| (\omega^{\varepsilon_j}_0  - \omega _0) (1-\chi_1) \|_{L^\infty} \\ 
 &\qquad +e^{t C_*''}  \left ( t \sum_{j=1} ^2 \| \omega^{\varepsilon_j}_0 - \omega_0 \|_{L^p} + \frac{1}{|x| - (3C_* + 2) t - 3 } \int_0 ^t \mathcal{E}_1 (\varepsilon_1, \varepsilon_2, s) \rmd s \right ). \nonumber
\end{align}
\end{theorem}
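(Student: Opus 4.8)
The plan is to combine a finite--speed--of--propagation argument with a Lagrangian representation of the vorticity difference and a careful Biot--Savart splitting. First I would record the two structural facts that drive everything. Since each $\omega^\varepsilon$ is transported and, by hypotheses (1)--(2), $\omega_0^\varepsilon\in L^1\cap L^p$ uniformly in $\varepsilon$ (the piece $\omega_0^\varepsilon\chi_1$ has compact support and is in $L^p$, hence in $L^1$; the piece $\omega_0^\varepsilon(1-\chi_1)$ is in $L^1\cap C^1$), standard elliptic estimates on the Biot--Savart law give a uniform bound $\|u^\varepsilon(t)\|_{L^\infty}\le C_*$, exactly as in the proof of Theorem~\ref{ContinuationTheorem}; hence every Lagrangian trajectory moves at speed at most $C_*$, so the region where the solutions can differ badly is confined at time $s$ to a ball $B_*(s)$ whose radius grows linearly in $s$. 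Second, reasoning exactly as in Theorem~\ref{ContinuationTheorem}, hypothesis (2) together with the local $L^\infty$ bound on $\omega^\varepsilon$ and the global $L^1\cap L^2$ bound propagates $C^1$ regularity of $\omega^\varepsilon$ outside $B_*(s)$, uniformly in $\varepsilon$; in particular $\|\nabla\omega^\varepsilon(s)\|_{L^\infty(\mathbb{R}^2\setminus B_*(s))}$ and the Lipschitz seminorm of $u^\varepsilon(s)$ there are bounded by a constant $C_*''$ depending only on the data.

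Next I would write the equation for $v:=\omega^{\varepsilon_1}-\omega^{\varepsilon_2}$ and $w:=u^{\varepsilon_1}-u^{\varepsilon_2}=K[v]$, namely $\partial_t v+u^{\varepsilon_1}\cdot\nabla v=-\,w\cdot\nabla\omega^{\varepsilon_2}=:h$, and use the flow $\Phi=\Phi^{\varepsilon_1}$ of $u^{\varepsilon_1}$ to represent $v(x,t)=v_0(\Phi_t^{-1}(x))+\int_0^t h(\Phi_s(\Phi_t^{-1}(x)),s)\,\rmd s$. For $x$ outside a ball growing linearly at the rate in the statement, the entire curve $s\mapsto y_s:=\Phi_s(\Phi_t^{-1}(x))$ stays outside $B_*(s)$ for all $s\in[0,t]$ because trajectories move at speed $\le C_*$; hence $\Phi_t^{-1}(x)$ lies in the exterior region at time $0$, where $v_0=(\omega_0^{\varepsilon_1}-\omega_0)(1-\chi_1)-(\omega_0^{\varepsilon_2}-\omega_0)(1-\chi_1)$, producing the first term $C\sum_j\|(\omega_0^{\varepsilon_j}-\omega_0)(1-\chi_1)\|_{L^\infty}$, and $|\nabla\omega^{\varepsilon_2}(y_s,s)|\le C_*''$, so everything reduces to estimating $|w(y_s,s)|$.

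For this I would split the Biot--Savart integral $w(y_s,s)=\int K(y_s-z)v(z,s)\,\rmd z$ according to whether $z\in B_*(s)$ or not. On $B_*(s)$, since $K\in L^{p'}_{loc}$ with $p'=p/(p-1)<2$ and $|B_*(s)|$ stays bounded on $[0,t]$, H\"older gives a contribution $\lesssim\|v(s)\|_{L^p(B_*(s))}\lesssim\mathcal{E}_p(\varepsilon_1,\varepsilon_2,s)$; if $x$ lies in the further region $\mathbb{R}^2\setminus B_{3(C_*+1)t+4}(0)$, then $y_s$ sits at distance $\ge|x|-(3C_*+2)t-3$ from $B_*(s)$, so one instead uses $\|K(y_s-\cdot)\|_{L^\infty(B_*(s))}\|v(s)\|_{L^1(B_*(s))}\lesssim\frac{1}{|x|-(3C_*+2)t-3}\,\mathcal{E}_1(\varepsilon_1,\varepsilon_2,s)$, giving the improved second display. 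On $\mathbb{R}^2\setminus B_*(s)$ the vorticity difference is the regular exterior part; using $\|v(s)\|_{L^1}\le\|\omega_0^{\varepsilon_1}\|_{L^1}+\|\omega_0^{\varepsilon_2}\|_{L^1}$ for the far tail of the kernel and the local $C^1$ and $L^\infty$ control near $y_s$, this contribution is $\lesssim E(s):=\|v(s)\|_{(L^1\cap L^\infty)(\mathbb{R}^2\setminus B_*(s))}$. Restricting the transport equation for $v$ to the exterior region (writing $w\cdot\nabla\omega^{\varepsilon_2}=\nabla\cdot(w\,\omega^{\varepsilon_2})$ if convenient) and inserting the bound just obtained for $w$ yields a closed Gr\"onwall inequality for $E$, of the form $E(t)\lesssim\sum_j\|(\omega_0^{\varepsilon_j}-\omega_0)(1-\chi_1)\|_{L^\infty}+\int_0^t\big(E(s)+\mathcal{E}_p(s)+\textstyle\sum_j\|\omega_0^{\varepsilon_j}-\omega_0\|_{L^p}\big)\rmd s$ with constant controlled by $C_*''$; Gr\"onwall produces the factor $e^{tC_*''}$, and feeding $E$ and $\mathcal{E}_p$ back into the Duhamel formula above gives both displayed estimates.

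\textbf{Main obstacle.} The genuinely delicate point is the self--referential exterior estimate in the last step: $w=K[v]$ evaluated in the exterior depends on $v$ in the very region one is trying to control, and because the two--dimensional Biot--Savart kernel is not integrable at infinity one is forced to also invoke the merely bounded (not small) quantity $\|v(s)\|_{L^1}$, and to separate cleanly the interior piece — which legitimately produces the a priori large quantities $\mathcal{E}_p$, $\mathcal{E}_1$ measuring the non--uniqueness that is not being controlled — from the exterior piece, whose smallness must be propagated in time. Making this separation quantitative forces one to compare the two distinct Lagrangian flows $\Phi^{\varepsilon_1}$ and $\Phi^{\varepsilon_2}$ in the exterior region, whose divergence is itself governed by $\|w\|_{L^\infty}$; this coupling is precisely what necessitates the Gr\"onwall argument and the exponential constant $C_*''$ in the statement.
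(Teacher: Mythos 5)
Your architecture is genuinely different from the paper's. You run an Eulerian argument: a Duhamel formula for $v=\omega^{\varepsilon_1}-\omega^{\varepsilon_2}$ along the flow of $u^{\varepsilon_1}$ with source $-w\cdot\nabla\omega^{\varepsilon_2}$, closed by a Gr\"onwall inequality for $E(s)=\|v(s)\|_{(L^1\cap L^\infty)(\mathbb{R}^2\setminus B_*(s))}$; this also obliges you to invoke uniform-in-$\varepsilon$ propagated $C^1$ bounds on the solutions in the exterior (available from Theorem \ref{ContinuationTheorem}, so not itself a gap). The paper instead runs a purely Lagrangian argument: it Gr\"onwalls the flow-map difference $\rho(t)=\sup_x|X_t^{\varepsilon_1}(x)-X_t^{\varepsilon_2}(x)|$ over exterior points and only at the end converts back to the vorticity difference using the $C^1$ bound on the \emph{initial} data $\omega_0^\varepsilon(1-\chi_1)$. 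Your treatment of the interior contribution to the Biot--Savart integral (H\"older against $\|v(s)\|_{L^p(B_*(s))}$, and the $L^\infty$--$L^1$ pairing with the $1/(|x|-(3C_*+2)t-3)$ gain for far $x$) matches the paper and correctly produces the $\mathcal{E}_p$ and $\mathcal{E}_1$ terms, as does your identification of the first term from $v_0\circ\Phi_t^{-1}$.

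The gap is in the exterior far-field piece of $w=K[v]$. Since $K$ is not integrable at infinity, $\int_{z\notin B_*(s),\,|z-y_s|>1}K(y_s-z)\,v(z,s)\,\rmd z$ must be paired with an $L^q(\mathbb{R}^2\setminus B_*(s))$ norm of $v$ for some $q<2$, which is why your $E$ carries the exterior $L^1$ norm. But that norm is neither small initially nor propagated: hypothesis (2) gives only uniform boundedness, not convergence, of $\omega_0^\varepsilon(1-\chi_1)$ in $L^1$, so $\|v_0\|_{L^1(\mathbb{R}^2\setminus B_2)}$ is $O(1)$ and is not among the quantities on the right-hand side of the theorem; and propagating it would require $w\cdot\nabla\omega^{\varepsilon_2}\in L^1$ of the exterior, i.e.\ $\nabla\omega^{\varepsilon_2}\in L^1$ there, which the hypotheses do not supply. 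Interpolating instead via $\|v\|_{L^q}\le\|v\|_{L^1}^{1-\theta}\|v\|_{L^\infty}^{\theta}$ turns the Gr\"onwall relation into an Osgood-type inequality with a sublinear power of $E$, which cannot yield the stated linear-in-data bound. The paper's mechanism for this term is structurally different and is the key point you are missing: it never pairs $K$ against $v(\cdot,s)$ in the exterior, but writes the exterior integrand as $\omega_0(A_s^{\varepsilon_1}(y))-\omega_0(A_s^{\varepsilon_2}(y))$ --- the \emph{same} profile composed with the two flows --- and uses the maximal-function characterization of $W^{1,p}$, $|f(a)-f(b)|\le (M\nabla f(a)+M\nabla f(b))|a-b|$, to bound this pointwise by $|A_s^{\varepsilon_1}(y)-A_s^{\varepsilon_2}(y)|$ times an $L^p$ function of $y$; H\"older then gives a bound of the form $\sup_y|X_s^{\varepsilon_1}(y)-X_s^{\varepsilon_2}(y)|$ times a fixed constant, i.e.\ a term exactly proportional to the Gr\"onwall variable. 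You correctly flag in your ``main obstacle'' paragraph that the two Lagrangian flows must be compared, but the quantity you actually Gr\"onwall does not implement that comparison; to repair the argument you should take the flow difference $\rho$ as the Gr\"onwall variable, as the paper does.
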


\begin{remark}
The above theorem implies that $L^1_t L^p_x$ convergence of $\omega^\epsilon (s) $ in region $B_{(2C_* + 2) s + 3} (0) $ determines uniform convergence outside $B_{(2C_* + 2) t + 3} (0)$. Also, if $x$ is sufficiently far away from $3C_*t$-ball, the effect of nonuniqueness inside the $2C_*t$-ball is small.
\end{remark}

\begin{remark}
A similar result holds provided that $\omega_0^\varepsilon$ has a uniform modulus of continuity in $B_1(0)^c$.
\end{remark}
\begin{proof}
We show estimate for difference of flows: we start from that if $x \in \mathbb{R}^2 \setminus B_{(2 C_* + 2)t + 3 } (0)$, then for $s \in [0, t]$ and any $\varepsilon >0$, 
\be
X_s ^\varepsilon (x) \in \mathbb{R}^2  \setminus B_{(C_* + 1) s + 3/2} (0).
\ee
In particular, $X_s ^{\varepsilon_1} (x)$ and $X_s ^{\varepsilon_2} (x)$ belong to the region where $u^{\varepsilon_2}(s)$ is Lipschitz. Now we write
\begin{align*}
\frac{\rmd}{\rmd t} ( X_t ^{\varepsilon_1} (x) - X_t ^{\varepsilon_2} (x) ) &= u^{\varepsilon_1} (X_t ^{\varepsilon_1} (x), t) - u^{\varepsilon_2} (X_t ^{\varepsilon_2} (x), t) \\
&= (u^{\varepsilon_1} - u^{\varepsilon_2} ) (X_t ^{\varepsilon_1} (x), t) + (u^{\varepsilon_2} (X_t ^{\varepsilon_1} (x), t) -u^{\varepsilon_2} (X_t ^{\varepsilon_2} (x), t) ), \\
|X_t ^{\varepsilon_1} (x) - X_t ^{\varepsilon_2} (x) | &\le \int_0 ^t  | (u^{\varepsilon_1} - u^{\varepsilon_2}) ( X_s^{\varepsilon_1} (x),s) | ds + \int_0 ^t C_* ' ( |X_s ^{\varepsilon_1 } (x) - X_s ^{\varepsilon_2} (x) | ) \rmd s 
\end{align*}
Denoting $A_t = X^{-1}_t$ and invoking Biot-Savart's law, we have
\be
\begin{split}
(u^{\varepsilon_1} - u^{\varepsilon_2}) ( X_s^{\varepsilon_1} (x),s) &= \int_{\mathbb{R}^2} \frac{(X_s ^{\varepsilon_1} (x) - y)^{\perp} } {|X_s ^{\varepsilon_1} (x) - y|^2} (\omega_0 ^{\varepsilon_1} - \omega_0 ^{\varepsilon_2}) ( A_s ^{\varepsilon_1} (y))\rmd y\\
& + \int_{\mathbb{R}^2} \frac{(X_s ^{\varepsilon_1} (x) - y)^\perp}{|X_s ^{\varepsilon_1} (x) - y|^2} \big(\omega_0^{\varepsilon_2} (A_s ^{\varepsilon_1} (y) ) - \omega_0^{\varepsilon_2} (A_s ^{\varepsilon_2} (y) ) \big) dy =: I_1 + I_2.
\end{split}
\ee
We notice that for $p>2$,
\be
|I_1| \le C \| \omega_0 ^{\varepsilon_1} - \omega_0^{\varepsilon_2} \|_{L^p}.
\ee
Since $\omega_0 ^{\varepsilon_2} \rightarrow \omega_0 $ in $L^p$, $p>2$, $I_2$ is controlled by
\be
I_2\leq C \| \omega_0 ^{\varepsilon_2} - \omega_0 \|_{L^p} + I_2 ',
\ee
where $I_2 '$ is
\be
\begin{split}
I_2 ' &=\left( \int_{|y| \le {(2C_* + 2) s + 3}  } + \int_{|y| > {(2C_* + 2)s + 3 }  }\right)\frac{(X_s ^{\varepsilon_1} (x) - y)^\perp}{|X_s ^{\varepsilon_1} (x) - y|^2} \big(\omega_0 (A_s ^{\varepsilon_1} (y) ) - \omega_0 (A_s ^{\varepsilon_2} (y) ) \big)\rmd y.
\end{split}
\ee
The second integral above can be controlled by
\be\nonumber
C \sup_{y \in \mathbb{R}^2 \setminus B_{(2C_* + 2) s + 3} (0) } |X_s ^{\varepsilon_1} (y) - X_s ^{\varepsilon_2} (y) | \| \nabla (\omega_0  (1- \chi_1 ) )\|_{L^p} =  \sup_{y \in \mathbb{R}^2 \setminus B_{(2C_* + 2) s + 3} (0) } |X_s ^{\varepsilon_1} (y) - X_s ^{\varepsilon_2} (y) | C_{***} '
\ee
for $p>2$, using the characterization of $W^{1,p}$ space ($|f(x) - f(y) | \le (M\nabla f (x) + M \nabla f(y) ) |x-y|$; one can refer to \cite{H1996}, \cite{CD2008} and references therein for example) and that $X_s ^{\varepsilon_2} (y) \in \mathbb{R}^2 \setminus B_1 (0)$ for $|y| > (2C_* + 2) s + 3$. The first integral is controlled by $ \mathcal{E}_p (\varepsilon_1, \varepsilon_2, s)$.
If $|x| > 3(C_*+1) t + 4$, we have a better control: the first integral is controlled by
\be
\frac{1}{|x| - (3C_* + 2) s - 3} \mathcal{E}_1 (\varepsilon_1, \varepsilon_2, s).
\ee
Therefore, if we let 
\be
\rho(\varepsilon_1, \varepsilon_2, t) = \sup_{x \in \mathbb{R}^2 \setminus B_{(2C_* + 2)t + 3} (0) } | X_t ^{\varepsilon_1} (x) - X_t ^{\varepsilon_2} (x) |,
\ee
we have
\begin{align*}
\rho(\varepsilon_1, \varepsilon_2, t) &\le Ct (\| \omega_0 ^{\varepsilon_1} - \omega_0 \|_{L^p} + \| \omega_0 ^{\varepsilon_2} - \omega_0 \|_{L^p}) + \int_0 ^t \mathcal{E}_p (\varepsilon_1, \varepsilon_2, s) \rmd s + \int_0 ^t (C_{***} ' + C_* ') \rho(\varepsilon_1, \varepsilon_2, s)  \rmd s.
\end{align*}
Then, by Gronwall's lemma, we have
\be\label{flowestimate1}
\rho(\varepsilon_1, \varepsilon_2, t) \le e^{(C_*' + C_{***} ' )t } \left ( Ct \sum_{j=1}^2 \| \omega_0^{\epsilon_j } - \omega_0 \|_{L^p } + \int_0 ^t \mathcal{E}_p (\epsilon_1, \epsilon_2, s) \rmd s \right ).
\ee
Moreover, if $|x| > 3(C_* + 1) t + 4$, we have
\be\label{flowestimate2}
|X_t ^{\varepsilon_1} (x) - X_t ^{\varepsilon_2} (x) | \le e^{(C_*' + C_{***} ' )t } \left (  C t \sum_{j=1} ^2\| \omega_0^{\varepsilon_{j}}  - \omega_0\|_{L^p} + \frac{1}{|x| - (3C_* + 2) t - 3 } \int_0 ^t \mathcal{E}_1 (\varepsilon_1, \varepsilon_2, s) \rmd s \right ).
\ee
In particular, for any $\varepsilon> 0$ and $x \in \mathbb{R}^2 \setminus B_{(2C_* + 2) t + 3 } (0)$, we see that $\omega_0 ^{\varepsilon} \chi_1 \circ X_t ^{\varepsilon} (x) = 0$. Thus, if
\be
\sup_\varepsilon \| \nabla (\omega^\varepsilon_0 (1-\chi_1 ) ) \|_{L^\infty} \le C''_* < \infty,
\ee
then
\begin{align*}
\omega^{\varepsilon_1} (x, t) - \omega^{\varepsilon_2} (x, t) &= \omega_0 ^{\varepsilon_1} (1-\chi_1 ) \circ A_t ^{\varepsilon_1} (x)- \omega_0 ^{\varepsilon_2} (1-\chi_1 ) \circ A_t ^{\varepsilon_2} (x) \\
&= (\omega_0 ^{\varepsilon_1} - \omega_0 ^{\varepsilon_2} ) (1-\chi_1 ) \circ A_t ^{\varepsilon_1} (x) \\
&\qquad + ( \omega_0 ^{\varepsilon_2} (1-\chi_1 ) \circ A_t ^{\varepsilon_1} (x) - \omega_0 ^{\varepsilon_2} (1-\chi_1 ) \circ A_t ^{\varepsilon_2} (x) ), \\
| \omega^{\varepsilon_1} (x, t) - \omega^{\varepsilon_2} (x,t) | &\le \|  (\omega^{\varepsilon_1} - \omega^{\varepsilon_2}) (1- \chi_1 ) \|_{L^\infty} + |X_t ^{\varepsilon_1} (x) - X_t ^{\varepsilon_2} (x) | C_* ''.
\end{align*}
Applying the above estimate for $|X_t ^{\varepsilon_1} (x) - X_t ^{\varepsilon_2} (x)|$ gives the desired conclusion.
\end{proof}

\section*{Acknowledgements}

The research of T.D. was partially supported by NSF-DMS grant 2106233 and the Charles Simonyi Endowment at the Institute for Advanced Study.
 The research of T.E. was partially supported by the NSF grants DMS-2043024 and DMS-2124748  as well as the Alfred P. Sloan Foundation.

\bibliographystyle{amsplain}

\begin{thebibliography}{10} 
\bibitem{notesVishik} 
Albritton, D., Brué, E., Colombo, M., De Lellis, C., Giri, V., Janisch, M., \& Kwon, H. (2021). Instability and nonuniqueness for the $2 d $ Euler equations in vorticity form, after M. Vishik. arXiv preprint arXiv:2112.04943.
\bibitem{AB08} 
L.Ambrosio and P.Bernard, Uniqueness of signed measures solving the continuity equation for Osgood vector fields, Atti Accad. Naz. Lincei Rend. Lincei Mat. Appl. 19 (2008), no. 3, 237–245.
\bibitem{BCD11} 
H.Bahouri, J.Y.Chemin and  R.Danchin. (2011). Fourier analysis and nonlinear partial differential equations (Vol. 343). Springer Science \& Business Media.
\bibitem{BC} 
A.L.Bertozzi and P.Constantin. Global regularity for vortex patches. Comm. Math. Phys.,
152(1):19–28, 1993.
\bibitem{BM} 
A.Bressan and R.Murray. On self-similar solutions to the incompressible Euler equations. Journal of Differential Equations 269.6 (2020): 5142-5203.
\bibitem{CC21} 
 L.Caravenna and G.Crippa, A directional Lipschitz extension lemma, with applications to uniqueness and Lagrangianity for the continuity equation, Comm. Partial Differential Equations 46 (2021), no. 8, 1488–1520.
\bibitem{CJ21} 
D.Chae and I.--J.Jeong. Preservation of log-H\"{o}lder coefficients of the vorticity in the transport equation. arXiv preprint arXiv:2104.11484 (2021).
\bibitem{Chem} 
Jean-Yves Chemin. Persistance de structures g\'{e}om\'{e}triques dans les fluides incompressibles bidimensionnels. Ann. Sci. Ecole Norm. Sup. (4) , 26(4):517–542, 1993.
\bibitem{CCS21} 
G.Ciampa, G.Crippa, and S.Spirito. Strong convergence of the vorticity for the 2D Euler Equations in the inviscid limit. Archive for Rational Mechanics and Analysis 240.1 (2021): 295-326.
\bibitem{CDE2019}
P.Constantin, T.D.Drivas, and T.M.Elgindi. Inviscid Limit of Vorticity Distributions in the Yudovich Class. Communications on Pure and Applied Mathematics (2020).
\bibitem{CS21} 
G.Crippa and  G.Stefani. An elementary proof of existence and uniqueness for the Euler flow in localized Yudovich spaces. arXiv preprint arXiv:2110.15648 (2021).
\bibitem{DM87} 
 R.DiPerna and A.Majda. 1987 Concentrations in regularizations for 2-D incompressible flow Commun. Pure
Appl. Math. XL 301–45
\bibitem{D15} 
S.Denisov. Double exponential growth of the vorticity gradient for the two-dimensional Euler equation. Proceedings of the American Mathematical Society 143.3 (2015): 1199-1210.
\bibitem{EJ} 
T.M.Elgindi and I-J Jeong. On singular vortex patches, I: Well-posedness issues. arXiv preprint arXiv:1903.00833 (2019).
\bibitem{Elling} 
V.Elling. Self-similar 2d Euler solutions with mixed-sign vorticity. Communications in Mathematical Physics 348.1 (2016): 27-68.
\bibitem{H02} 
P.Hartman. Ordinary differential equations. Society for Industrial and Applied Mathematics, 2002.
\bibitem{MP91} 
C.Marchioro and M.Pulvirenti. On the vortex–wave system. Mechanics, analysis and geometry: 200 years after Lagrange. Elsevier, 1991. 79-95.
\bibitem{sch} 
S.Schochet. The point‐vortex method for periodic weak solutions of the 2‐D Euler equations. Communications on pure and applied mathematics 49.9 (1996): 911-965.
\bibitem{S94} 
Philippe Serfati. Une preuve directe d’existence globale des vortex patches 2D. C. R. Acad. Sci.
Paris S\'{e}r. I Math., 318(6):515–518, 1994.
\bibitem{S97} 
A.Shnirelman, Evolution of singularities, generalized Liapunov function and generalized integral for an ideal incompressible fluid. American Journal of Mathematics, 119(3), 579-608,  (1997).
\bibitem{Vishik1} 
M.Vishik. Instability and non-uniqueness in the Cauchy problem for the Euler equations of an ideal incompressible fluid. Part I. 2018. arXiv: 1805.09426
\bibitem{Vishik2} 
M.Vishik. Instability and non-uniqueness in the Cauchy problem for the Euler equations of an ideal incompressible fluid. Part II. 2018. arXiv: 1805.09440
\bibitem{Y95} 
V.I.Yudovich, Uniqueness theorem for the basic nonstationary problem in the dynamics of an ideal incompressible fluid. Mathematical Research Letters 2.1 (1995): 27-38.
\bibitem{H1996}
P. Hajlasz, Sobolev spaces on an arbitrary metric space. Potential Analysis 5:403-415, 1996.
\bibitem{CD2008} G. Crippa and C. De Lellis, Estimates and regularity results for the Diperna-Lions flow. J. reine angew. Math. 616(2008), 15-46.
\end{thebibliography}

\end{document}